\documentclass[11pt]{amsart}  
\usepackage{amsbsy,amsmath,amsthm,amssymb,color,verbatim}
\usepackage[backrefs]{amsrefs}
\usepackage{float}
\usepackage{fullpage,cancel,hyperref}
\usepackage{tikz}
\usepackage{mathrsfs}
\usepackage{enumitem}

\def\P{\mathcal{P}}
\def\a{\alpha}
\def\hroot{{\tilde{\alpha}}}
\def\hr{{\tilde{\alpha}}}
\allowdisplaybreaks

\usepackage{relsize}
\makeatletter
\newtheorem*{rep@theorem}{\rep@title}
\newcommand{\newreptheorem}[2]{%
\newenvironment{rep#1}[1]{%
 \def\rep@title{#2~\ref{##1}}%
 \begin{rep@theorem}}%
 {\end{rep@theorem}}}
\makeatother

\makeatletter
\newtheorem*{rep@conjecture}{\rep@title}
\newcommand{\newrepconjecture}[2]{%
\newenvironment{rep#1}[1]{%
 \def\rep@title{#2~\ref{##1}}%
 \begin{rep@conjecture}}%
 {\end{rep@conjecture}}}
\makeatother

\makeatletter
\newcommand{\addresseshere}{%
  \enddoc@text\let\enddoc@text\relax
}
\makeatother

\theoremstyle{definition}

\newtheorem{theorem}{Theorem}
\newreptheorem{theorem}{Theorem}
\newtheorem{proposition}{Proposition}
\newtheorem{problem}{Problem}
\newtheorem{corollary}{Corollary}
\newtheorem*{theorem*}{Theorem}

\author{Pamela E. Harris}
\address[P. E. Harris]{Department of Mathematics and Statistics, Williams College, United States}
\email{\textcolor{blue}{\href{mailto:peh2@williams.edu}{peh2@williams.edu}}}

\author{Margaret Rahmoeller}
\address[M. Rahmoeller]{Department of Mathematics, Computer Science, and Physics, Roanoke College}
\email{\textcolor{blue}{\href{mailto:rahmoeller@roanoke.edu}{rahmoeller@roanoke.edu}}}
\thanks{}

\author{Lisa Schneider}
\address[L. Schneider]{Department of Mathematics and Computer Science, Salisbury University}
\email{\textcolor{blue}{\href{mailto:lmschneider@salisbury.edu}{lmschneider@salisbury.edu}}}
\thanks{}

\keywords{Kostant's partition function, $q$-analog of Kostant's partition function, Gaussian distribution.}
\date{\today}
\title{On the asymptotic behavior of the $q$-analog of\\Kostant's partition function}%
\begin{document}
  \maketitle
\begin{abstract}
Kostant's partition function counts the number of distinct ways to express a weight of a classical Lie algebra $\mathfrak{g}$ as a sum of positive roots of $\mathfrak{g}$.  We refer to each of these expressions as decompositions of a weight. Our main result considers an infinite family of weights, irrespective of Lie type, for which we establish a closed formula for the $q$-analog of Kostant's partition function and then prove that the (normalized) distribution of the number of positive roots in the decomposition of any of these weights converges to a Gaussian distribution as the rank of the Lie algebra goes to infinity. We also extend these results to the highest root of the classical Lie algebras and we end our analysis with some directions for future research.
\end{abstract}


\section{Introduction}
A classical problem in analytic number theory is to determine the behavior of certain  distributions associated to the decompositions of positive integers, as sums of positive integers. For example, define the Fibonacci numbers as $F_n=F_{n-1}+F_{n-2}$, whenever $n\geq 3$ and $F_1=1$, $F_2=2$. Then Zeckendorf's Theorem \cite{Zeckendorf} states that the positive integers can be uniquely expressed as a sum of nonconsecutive Fibonacci numbers and such an expression is called a decomposition. Lekkerkerker \cite{Lekkerkerker} later established that if $m \in [F_n,F_{n+1})$, then the number of summands needed in the decomposition of $m$ is asymptotic to $\left(\frac{1}{2}(1-\frac{1}{\sqrt{5}})\right)n$ as $n\to \infty$. From this work, it was found that the distribution of the number of summands in decompositions of positive integers actually converges to a Gaussian \cite{Miller1}. These results have been extended to numerous other sequences of integers which allow unique decompositions of the positive integers as a sum of elements in the sequence \cites{MR3479490,MR3584005,MR3561734,MR3666531,MR3696270,MR3513859}. 

In our work, we bring the tools of analytic number theory to the study of vector partitions. In particular, we study Kostant's partition function which counts the number of ways of expressing a weight (vector) of a simple Lie algebra $\mathfrak{g}$ as a linear combination of the positive roots of $\mathfrak{g}$ (a finite set of vectors). As is standard in analytic number theory, we refer to such expressions as decompositions. 

We recall that Lusztig \cite{LL} defined the $q$-analog of Kostant's partition function \cite{KMF} as the polynomial valued function
\begin{align*}
\wp_q\left(\xi\right)=c_0+c_1q+c_2q^2+\cdots+c_kq^k
\end{align*}
where $c_i$ denotes the number of ways the weight $\xi$ can be expressed as a sum of $i$ positive roots. Hence, evaluating $\wp_q(\xi)|_{q=1}$ yields the total number of decompositions of the weight $\xi$ as a sum of positive roots. However, the $q$-analog gives us more detailed information as it keeps track of the number of positive roots used in the decompositions and plays a key role in our analysis.

Our first main result considers an
infinite family of weights of a classical Lie algebra of rank~$r$, irrespective of Lie type, for which we establish a closed formula for the $q$-analog of Kostant's partition function and then prove that the 
(normalized) distribution of the number of positive roots in the decomposition of these weights converges to a Gaussian distribution as $r\to\infty$.

\begin{theorem}\label{thm:general result}
Let $\mathfrak{g}$ be a classical Lie algebra of rank $r$, with $\{\a_1,\a_2,\ldots,\a_r\}$ a set of simple roots of $\mathfrak{g}$. If
\[\lambda=\left(\sum_{i=1}^r\alpha_{i}\right)+\sum_{i\in I}c_i\alpha_i\]
where $I=\{i_1,i_2,\ldots, i_\ell\}$ is a set of nonconsecutive integers satisfying $1< i_1<i_2<\cdots<i_\ell<
r-2$, and  $c_{i_1},c_{i_2},\ldots,c_{i_\ell}\in \mathbb{Z}_{>0}$, 
then 
\[\wp_q(\xi)=q^{m+1} (1 + q)^{r-1-2\ell} (2 + 2 q + q^2)^\ell.\]
Morever, if  $Y_{r}$ is the  random variable denoting the total number of positive roots used in the decompositions of $\lambda$, normalize $Y_{r}$ to $Y_{r}' = \left(Y_r - \mu_{r}\right)/\sigma_{r}$ where $\mu_{r}$ and $\sigma_{r}^2$ are the mean and variance of $Y_{r}$, respectively, then
\[\mu_r=\frac{r+1}{2}-\frac{1}{5}
\ell+\sum_{i=1}^\ell c_i\qquad\mbox{ and }\qquad \sigma_r^2=\frac{r-1}{4}+\frac{3}{50}\ell,\]
and $Y_{r}'$ converges in distribution to the standard normal distribution as $r \rightarrow \infty$ for every classical Lie algebra $\mathfrak{g}$ of rank $r$.
\end{theorem}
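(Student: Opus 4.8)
The plan is to derive the whole theorem from the closed formula
\[\wp_q(\lambda)=q^{m+1}(1+q)^{r-1-2\ell}(2+2q+q^2)^\ell,\]
so I would first establish this identity and then read off the limiting distribution from it.

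To prove the closed formula I would work directly with the $q$-analog of Kostant's partition function, using the generating identity $\sum_{\mu}\wp_q(\mu)e^{\mu}=\prod_{\alpha>0}(1-qe^{\alpha})^{-1}$ together with the very rigid shape of $\lambda$: it is the sum of all simple roots, perturbed only at interior, mutually non-adjacent indices $i\in I$. This rigidity should force the count to \emph{localize}. Concretely I would either (a) induct on $r$, peeling off $\alpha_r$ (then $\alpha_{r-1}$, $\ldots$) and tracking how the coefficient of the relevant weight evolves, showing that each ordinary index contributes a factor $(1+q)$ and each index in $I$ a factor $(2+2q+q^2)$; or (b) argue combinatorially that a decomposition of $\lambda$ is determined by independent local choices at each ``site,'' and enumerate the $q$-weighted local choices directly. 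The uniformity across Lie types would then come from checking that the only positive roots that can occur in a decomposition of such a $\lambda$ are common to all classical types — the extra positive roots in types $B$, $C$, $D$ involve coordinate patterns incompatible with the support of $\lambda$ and can be excluded by a support/parity argument. I expect this type-uniform verification of the closed formula to be the main obstacle; everything after it is routine.

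Granting the formula, note that $\wp_q(\lambda)\big|_{q=1}=2^{r-1-2\ell}5^{\ell}$, so the probability generating function of $Y_r$ is
\[\mathbb{E}\!\left[q^{Y_r}\right]=q^{m+1}\left(\frac{1+q}{2}\right)^{r-1-2\ell}\left(\frac{2+2q+q^2}{5}\right)^{\ell}.\]
A product of generating functions is the generating function of a sum of independent random variables, so
\[Y_r=(m+1)+\sum_{j=1}^{r-1-2\ell}B_j+\sum_{k=1}^{\ell}Z_k,\]
where the $B_j$ are i.i.d.\ $\mathrm{Bernoulli}(1/2)$, the $Z_k$ are i.i.d.\ with $\mathbb{P}(Z_k=0)=\mathbb{P}(Z_k=1)=2/5$ and $\mathbb{P}(Z_k=2)=1/5$, and all summands are mutually independent. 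From $\mathbb{E}[B_j]=1/2$, $\mathrm{Var}(B_j)=1/4$, $\mathbb{E}[Z_k]=4/5$, $\mathrm{Var}(Z_k)=14/25$, additivity of means, and additivity of variances over independent summands, one recovers exactly $\mu_r=\frac{r+1}{2}-\frac{\ell}{5}+\sum_i c_i$ and $\sigma_r^2=\frac{r-1}{4}+\frac{3\ell}{50}$ (using $m=\sum_i c_i$, the value forced by the minimal length of a decomposition of $\lambda$ and by the lowest-degree term of the closed formula).

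It remains to verify the normal limit for $Y_r'=(Y_r-\mu_r)/\sigma_r$. The admissibility constraints on $I$ give $\ell\le\lceil(r-4)/2\rceil$, hence $r-1-2\ell\ge 2$ and $\sigma_r^2\ge\frac{r-1}{4}\to\infty$. Since $Y_r-(m+1)$ is a sum of independent random variables whose centered versions are each bounded in absolute value by $2$, Lyapunov's central limit theorem for triangular arrays (with exponent $\delta=1$) applies:
\[\frac{1}{\sigma_r^{3}}\left(\sum_{j}\mathbb{E}\,\bigl|B_j-\tfrac12\bigr|^{3}+\sum_{k}\mathbb{E}\,\bigl|Z_k-\tfrac45\bigr|^{3}\right)\le\frac{8(r-1-\ell)}{\sigma_r^{3}}=O\!\left(r^{-1/2}\right)\longrightarrow 0,\]
so $Y_r'$ converges in distribution to $\mathcal{N}(0,1)$. (When $\ell$ is held fixed as $r\to\infty$ one can bypass Lyapunov entirely: $\sum_k Z_k$ is then a bounded random variable, $\bigl(\sum_j B_j-\tfrac{r-1-2\ell}{2}\bigr)\big/\sqrt{(r-1-2\ell)/4}$ converges to $\mathcal{N}(0,1)$ by De Moivre--Laplace, and since $\sigma_r^2\big/\bigl((r-1-2\ell)/4\bigr)\to 1$ the conclusion follows from Slutsky's theorem.) Because the closed formula, and hence this entire derivation, is insensitive to the Lie type, the statement holds for every classical Lie algebra $\mathfrak{g}$ of rank $r$.
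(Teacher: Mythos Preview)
Your proposal is correct, and the distributional half of it is genuinely different from the paper's argument.

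For the closed formula the paper does essentially what you sketch in option~(b), but via induction on~$|I|$ rather than on~$r$: the base case $I=\emptyset$ is the type~$A$ highest-root identity $\wp_q(\sum_i\alpha_i)=q(1+q)^{r-1}$, and the inductive step fixes $j=\max(I)$, splits on whether the extra copy of~$\alpha_j$ appears as a lone simple root, and in the remaining case treats $\alpha_{j-1}+\alpha_j$ and $\alpha_j+\alpha_{j+1}$ as indivisible ``quasi-simple'' roots, factoring the problem across the cut at~$j$. Passage from all $c_i=1$ to general $c_i$ is then a one-line multiplication by $q^{\sum_i(c_i-1)}$, and type independence is exactly your support observation: the constraint $i_\ell<r-2$ rules out every non-type-$A$ positive root. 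Your induction-on-$r$ plan would likely also work, but the paper's induction on~$|I|$ is cleaner because each step produces the factor $(2+2q+q^2)$ directly.

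Where you diverge substantively is after the closed formula. The paper computes $\mu_r$ and $\sigma_r^2$ by differentiating $g_r(y)$, and proves the normal limit by Taylor-expanding $\log g_r(e^{t/\sigma_r})$ and checking that it converges to $t^2/2$. Your route is more conceptual: read the factored probability generating function as exhibiting $Y_r-(m+1)$ as an independent sum of $r-1-2\ell$ Bernoulli$(1/2)$'s and $\ell$ copies of a bounded variable with law $(2/5,2/5,1/5)$ on $\{0,1,2\}$, compute the moments by additivity, and invoke Lyapunov with a uniform third-moment bound. This has two advantages: the mean/variance drop out with no calculation beyond $\mathrm{Var}(Z_k)=14/25$, and the limit theorem is immediate once $\sigma_r\to\infty$, with no generating-function analysis. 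The paper's MGF expansion, by contrast, gives slightly more (an explicit rate in the cumulant expansion) at the cost of heavier bookkeeping. Your bound $\ell\le\lceil(r-4)/2\rceil$, hence $r-1-2\ell\ge 2$ and $\sigma_r^2\ge(r-1)/4\to\infty$, is exactly what is needed to make Lyapunov fire uniformly in~$\ell$, so the argument covers the full statement and not just the fixed-$\ell$ case you isolate at the end.
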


In Lie type $A$, note that when $c_i=0$ for all $1\leq i\leq {r}$, then the weight $\lambda$ defined in Theorem~\ref{thm:general result} is  the highest root. This motivates extending Theorem~\ref{thm:general result} by considering the case when $\lambda$ is the highest root of a classical Lie algebra. Harris, Insko, and Omar gave closed formulas for the value of the $q$-analog of Kostant's partition function 
on the highest root of a classical Lie algebra  in \cite{HIO}. Our  main result follows.

\begin{theorem}\label{thm:type specific}
Let $\mathcal{P}_\mathfrak{g}(q)$ be the coefficients for the generating functions for the $q$-analog of Kostant's partition function for classical Lie algebra $\mathfrak{g}$ of rank $r$. Then $\mathcal{P}_\mathfrak{g}(q)$ is asymptotically Gaussian with mean and variance given by: \\

\noindent
Type $A_r$ $\left(r\geq 1\right)$:
\begin{align*}
 \mu_{r}  \ = \  \frac{r+1}{2}\;\;\; \mbox{and}& \;\;\;\sigma_{r}^2  \ = \  \frac{r-1}{4}
\end{align*} 
Type $B_r$ $\left(r\geq 2\right)$:
\begin{align*}
 \mu_{r}  \ &= \  \displaystyle\frac{\left(5-\sqrt{5}+\left(25-13\sqrt{5}\right)r\right)\left(5-\sqrt{5}\right)^r+\left(5+\sqrt{5}+\left(25+13\sqrt{5}\right)r\right)\left(5+\sqrt{5}\right)^r}{5\left[\left(5-3\sqrt{5}\right)\left(5-\sqrt{5}\right)^r+\left(5+3\sqrt{5}\right)\left(5+\sqrt{5}\right)^r\right]} \\ \mbox{and}& \nonumber\\
 \sigma_{r}^2  \ &= \  \displaystyle\frac{20^{r+1}r^2+\left[26\left(3\sqrt{5}-7\right)\left(5-\sqrt{5}\right)^{2r}-26\left(3\sqrt{5}+7\right)\left(5+\sqrt{5}\right)^{2r}+36\cdot\frac{20^{r+1}}{5}\right]r}{-5\left[\left(5-\sqrt{3}\right)\left(5-\sqrt{5}\right)^r+\left(5+3\sqrt{5}\right)\left(5+\sqrt{5}\right)^r\right]^2}\\\nonumber
 \ &\qquad + \ \displaystyle\frac{2\left(73-25\sqrt{5}\right)\left(5-\sqrt{5}\right)^{2r}+2\left(73+25\sqrt{5}\right)\left(5+\sqrt{5}\right)^{2r}-63\cdot\frac{20^{r+1}}{5}}{-5\left[\left(5-\sqrt{3}\right)\left(5-\sqrt{5}\right)^r+\left(5+3\sqrt{5}\right)\left(5+\sqrt{5}\right)^r\right]^2}
\end{align*} 
Type $C_r$ $\left(r\geq 3\right)$:
\begin{align*}
 \mu_{r}  \ &= \  \displaystyle
 \frac{\left(\left(1-\sqrt{5}\right)+\left(7-\sqrt{5}\right)r\right)\left(5-\sqrt{5}\right)^r+\left(\left(1+\sqrt{5}\right)+\left(7+\sqrt{5}\right)r\right)\left(5+\sqrt{5}\right)^r}{10\left(\left(5-\sqrt{5}\right)^r+\left(5+\sqrt{5}\right)^r\right)} \\ \mbox{and}& \nonumber\\
 \sigma_{r}^2  \ &= \  \displaystyle\frac{\frac{20^{r+1}}{4}r^2 + \left[13\left(\left(5-\sqrt{5}\right)^{2r}+\left(5+\sqrt{5}\right)^{2r}\right)+9\cdot\frac{20^{r+1}}{5}\right]r}{25\left(\left(5-\sqrt{5}\right)^r+\left(5+\sqrt{5}\right)^r\right)^2}\\\nonumber
 \ &\qquad + \ \displaystyle\frac{\left(-21+4\sqrt{5}\right)\left(5+\sqrt{5}\right)^{2r}-\left(21+4\sqrt{5}\right)\left(5-\sqrt{5}\right)^{2r}-37\cdot20^r}{25\left(\left(5-\sqrt{5}\right)^r+\left(5+\sqrt{5}\right)^r\right)^2}
\end{align*} 
Type $D_r$ $\left(r\geq 4\right)$:
\begin{align*}
 \mu_{r}  \ &= \ \ \displaystyle\frac{\left(15-\sqrt{5}+r\left(-5+7\sqrt{5}\right)\right)\left(5-\sqrt{5}\right)^r+\left(15+\sqrt{5}-r\left(5+7\sqrt{5}\right)\right)\left(5+\sqrt{5}\right)^r}{10\sqrt{5}\left(\left(5-\sqrt{5}\right)^r-\left(5+\sqrt{5}\right)^r\right)} \\ \mbox{and}\nonumber\\
 \sigma_{r}^2  \ &= \   \displaystyle\frac{\frac{20^{r+1}}{4}r^2-\left[13\left(\left(5+\sqrt{5}\right)^{2r}+\left(5-\sqrt{5}\right)^{2r}\right)+\frac{20^{r+1}}{5}\right]r}{-25\left[\left(5+\sqrt{5}\right)^r-\left(5-\sqrt{5}\right)^r\right]^2}\\\nonumber
 \ &\qquad + \ \displaystyle\frac{\left(34-3\sqrt{5}\right)\left(5+\sqrt{5}\right)^{2r}+\left(34+3\sqrt{5}\right)\left(5-\sqrt{5}\right)^{2r}-23\cdot 20^r}{-25\left[\left(5+\sqrt{5}\right)^r-\left(5-\sqrt{5}\right)^r\right]^2}.
\end{align*} 
\end{theorem}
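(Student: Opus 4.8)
The plan is to feed the closed-form evaluations of $\mathcal{P}_\mathfrak{g}(q):=\wp_q(\tilde{\alpha}_\mathfrak{g})$ on the highest root $\tilde{\alpha}_\mathfrak{g}$ obtained by Harris--Insko--Omar \cite{HIO} into the same probabilistic mechanism that underlies Theorem~\ref{thm:general result}. Type $A_r$ requires no new work: its highest root $\alpha_1+\cdots+\alpha_r$ is the $\ell=0$ case of Theorem~\ref{thm:general result}, so $\mathcal{P}_{A_r}(q)=q(1+q)^{r-1}$, the number of positive roots $Y_r$ has the law of $1$ plus a $\mathrm{Binomial}(r-1,\tfrac12)$, and the asserted mean $\tfrac{r+1}{2}$, variance $\tfrac{r-1}{4}$, and Gaussian limit are immediate from the De Moivre--Laplace theorem (or directly from Theorem~\ref{thm:general result}).

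For types $B_r$, $C_r$, $D_r$ I would first extract from the HIO formula a recurrence in the rank, $\mathcal{P}_{\mathfrak{g},r}(q)=u(q)\,\mathcal{P}_{\mathfrak{g},r-1}(q)+v(q)\,\mathcal{P}_{\mathfrak{g},r-2}(q)$ with polynomial coefficients $u,v$ and explicit low-rank initial data, and then show that for every $r$ the polynomial $\mathcal{P}_{\mathfrak{g},r}(q)$ can be written as
\[\mathcal{P}_{\mathfrak{g},r}(q)=q^{a_r}\prod_{j=1}^{N_r}f_{r,j}(q),\qquad N_r=\Theta(r),\]
where each factor $f_{r,j}$ has degree at most $2$, has nonnegative coefficients, and satisfies $f_{r,j}(1)>0$. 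Concretely, $\mathcal{P}_{\mathfrak{g},r}(q)/q^{a_r}$ always splits over $\mathbb{R}$ into linear and quadratic factors, and the content of this step is to check, inductively via the recurrence, that the resulting quadratic factors have nonnegative middle coefficients (equivalently, that the non-real zeros of $\mathcal{P}_{\mathfrak{g},r}$ have nonpositive real part). Such a factorization realizes $Y_r$ as a deterministic shift $a_r$ plus a sum $\sum_{j=1}^{N_r} Z_{r,j}$ of independent random variables, each bounded by $2$, with $Z_{r,j}$ having probability generating function $f_{r,j}(q)/f_{r,j}(1)$.

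Given that representation, asymptotic normality of $Y_r'=(Y_r-\mu_r)/\sigma_r$ follows from the Lindeberg--Feller central limit theorem as soon as $\sigma_r^2=\sum_j\mathrm{Var}(Z_{r,j})\to\infty$: the summands are uniformly bounded, so the Lindeberg (indeed Lyapunov) condition is automatic once the variance diverges. It remains to evaluate $\mu_r=\mathcal{P}_{\mathfrak{g},r}'(1)/\mathcal{P}_{\mathfrak{g},r}(1)$ and $\sigma_r^2=\mathcal{P}_{\mathfrak{g},r}''(1)/\mathcal{P}_{\mathfrak{g},r}(1)+\mu_r-\mu_r^2$. Differentiating the rank-recurrence once and twice and setting $q=1$ gives a coupled system of linear recurrences in $r$ for the triple $\big(\mathcal{P}_{\mathfrak{g},r}(1),\mathcal{P}_{\mathfrak{g},r}'(1),\mathcal{P}_{\mathfrak{g},r}''(1)\big)$ whose homogeneous part has characteristic polynomial $x^2-10x+20$, with roots $5\pm\sqrt5$; solving it with the type-dependent initial conditions read off from \cite{HIO} produces exactly the displayed closed forms for $\mu_r$ and $\sigma_r^2$ in types $B$, $C$, and $D$. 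Inspecting the leading-order behavior of those expressions shows $\sigma_r^2=\Theta(r)\to\infty$, which supplies the missing hypothesis in the CLT step.

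The step I expect to be the real obstacle is the structural claim for types $B$, $C$, $D$: unlike Theorem~\ref{thm:general result}, where $\wp_q$ is visibly built from the bounded blocks $(1+q)$ and $(2+2q+q^2)$, here the nonnegativity of the quadratic factors is not given for free --- a polynomial with positive coefficients (e.g.\ $1+q+q^2+q^3+q^4$) can have non-real zeros with positive real part --- so one must exploit the specific shape of the HIO formula, and the three types are not quite uniform (the $D_r$ formula, whose denominators involve $(5+\sqrt5)^r-(5-\sqrt5)^r$, is the most delicate). If the factorization does not hold verbatim for every rank, the same conclusion can be reached by a transfer-matrix/quasi-power argument instead: the bivariate generating function $\sum_{r}\mathcal{P}_{\mathfrak{g},r}(q)\,z^r$ is rational, its dominant singularity $z=\rho(q)$ is analytic near $q=1$ (the discriminant $u(1)^2+4v(1)=20$ is nonzero), and Hwang's quasi-power theorem then yields the Gaussian limit with the same $\mu_r$ and $\sigma_r^2$, at the cost of separately verifying the non-degeneracy of the variance. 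Everything past the structural step --- the CLT and the closed-form moment computations --- is routine.
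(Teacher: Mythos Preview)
Your fallback plan --- apply a quasi-power/transfer-matrix theorem to the rational bivariate generating function $\sum_r \mathcal{P}_{\mathfrak{g},r}(q)\,z^r$ --- is exactly the paper's primary proof, except that the paper invokes Bender's central limit theorem \cite{Bender} rather than Hwang's: the common denominator $1-(2+2q+q^2)z+(1+2q+q^2+q^3)z^2$ for types $B$, $C$, $D$ has, at $q=1$, two distinct real roots $(5\pm\sqrt5)/10$, and one checks the numerator does not vanish at the smaller root, so Bender's hypotheses are verified in one line per type. The paper does \emph{not} attempt your primary route of factoring $\mathcal{P}_{\mathfrak{g},r}(q)$ into degree-$\le 2$ pieces with nonnegative coefficients, and you are right to flag that as the obstacle: nothing in the closed form $\mathfrak{g}_+(q)\beta_+(q)^{r-i}+\mathfrak{g}_-(q)\beta_-(q)^{r-i}$ from \cite{HIO} suggests such a product, and proving that all non-real zeros have nonpositive real part would be a separate (and nontrivial) result. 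So your plan is viable, but the weight should sit on the fallback, which is short, rather than on the factorization, which is speculative.

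For the moments, the paper applies Proposition~\ref{prop:1} directly to the explicit HIO formula for $\mathcal{P}_{\mathfrak{g},r}(q)$ rather than differentiating the rank recurrence; both routes yield the same linear combinations of $(5\pm\sqrt5)^r$ and $r(5\pm\sqrt5)^r$, and your characteristic polynomial $x^2-10x+20$ matches after clearing the factor $2^r$ in $\beta_\pm(1)=(5\pm\sqrt5)/2$. The paper also gives a second, longer proof in an appendix that bypasses Bender altogether by expanding $\log g_r(e^{t/\sigma_r})$ to second order in $t$ and showing the moment generating function converges to $e^{t^2/2}$ --- conceptually your Lindeberg--Feller endgame, but executed directly on the HIO closed form rather than via any factorization.
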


This paper is organized as follows. Section~\ref{sec:general result} contains the proof of Theorem~\ref{thm:general result}. We give two proofs of Theorem \ref{thm:type specific}: the first in Section \ref{sec:Lie type results} uses the formulas of Harris, Insko, and Omar along with Bender's Theorem, while the second uses the classical approach via moment generating functions. This second proof is lengthy; hence we present it in Appendix \ref{app: alt proof}. We end with Section \ref{sec:future} where we present directions for further study.

\section{Background}\label{sec:background}

We begin by recalling the positive roots of each Lie type.

\begin{itemize}[leftmargin=1in]
\item[Type $A_r$:] If $r\geq 1$, the set of simple roots is given by $\Delta=\{\alpha_1,\alpha_{2}, \cdots, \alpha_{r}\}$,
and the set of positive roots is given by
\[
\Phi^+ = \Delta \cup \{\alpha_i+\alpha_{i+1}+\cdots+\alpha_j: 1 \leq i < j \leq r\}.
\]
The highest root is given by $\hroot=\a_1+\cdots+\a_r$.

\item[Type $B_r$:] If $r\ge 2$, the set of simple roots is given by $\Delta=\{\alpha_1,\ldots,\alpha_{r}\}$ and
the set of positive roots is given by
\[
\Phi^+=\Delta\cup\{\alpha_i+\cdots+\alpha_j:1\leq i<j\leq r\}\cup\{\alpha_i+\cdots+\alpha_{j-1}+2\alpha_j+\cdots+2\alpha_r:1\leq i<j\leq r\}.
\]
where $\hroot_{B_r}=\alpha_1+2\alpha_2+\cdots+2\alpha_4$ is the highest root.
  
\item[Type $C:$] If $r\geq 3$, the set of simple roots is given by $\Delta=\{\alpha_1,\ldots,\alpha_{r}\}$ and
the set of positive roots is given by
\begin{align*}
\Phi^+=\Delta&\cup\{\alpha_i+\cdots+\alpha_{j-1}+2\alpha_j+\cdots+2\alpha_{r-1}+\alpha_r:1\leq i<j\leq r-1\}\\&\cup\{\hroot_{C_r}\}\cup\{\alpha_i+\cdots+\alpha_j:1\leq i<j\leq r\}
\end{align*}
where $\hroot_{C_r}=2\alpha_1+2\alpha_2+\cdots+2\alpha_{r-1}+\alpha_r$ is the highest root.

\item[Type $D_r$:] If $r \geq 4$, the set of simple roots is given by  $\Delta=\{\alpha_1,\ldots,\alpha_{r}\}$ and
the set of positive roots is given by
\begin{align*}
\Phi^+&=\Delta\cup\{\alpha_i+\cdots+\alpha_{j-1}:1\leq i<j\leq r\}\cup\{\alpha_i+\cdots+\alpha_{r-2}+\alpha_r:1\leq i\leq r-2\}\\
&\qquad\cup\{\alpha_i+\cdots+\alpha_{j-1}+2\alpha_{j}+\cdots+2\alpha_{r-2}+\alpha_{r-1}+\alpha_r:1\leq i<j\leq r-2\},
\end{align*}
where $\hroot_{D_r}=\alpha_1+2\alpha_2+\cdots+2\alpha_{r-2}+\alpha_{r-1}+\alpha_r$ is the highest root. 
\end{itemize}

Let the random variable  $Y_r$ denote the total number of positive roots used in the decompositions of the highest root of a chosen classical Lie algebra as sums of positive roots, and let $p_{r,k}$ denote the number of decompositions of the highest root as a sum of exactly $k$ positive roots. We use the following result in our analysis.
\begin{proposition}[\cite{DDKMMV}, Propositions 4.7, 4.8]\label{prop:1} Let $F\left(x,y\right)=\sum_{r,k\geq0}p_{r,k}x^ry^k$ be the generating function of $p_{r,k}$, and let $g_r\left(y\right)=\sum_{k=0}^rp_{r,k}y^k$ be the coefficient of $x^r$ in $F\left(x,y\right)$. Then the mean of $Y_r$ is
\begin{align*}
\mu_r&=\frac{g_r'\left(1\right)}{g_{r}\left(1\right)}
\end{align*}
and the variance of $Y_r$ is 
\begin{align*}
\sigma_r^2&=\frac{\frac{d}{dy}\left(yg'_r\left(y\right)\right)|_{y=1}}{g_r\left(1\right)}-\mu_r^2.
\end{align*}
\end{proposition}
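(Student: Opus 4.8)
The plan is to recognize that the entire proposition is a routine consequence of the theory of probability generating functions, once one identifies the correct normalization. Since $p_{r,k}$ counts the decompositions of the highest root into exactly $k$ positive roots, the quantity $g_r(1)=\sum_{k=0}^r p_{r,k}$ is precisely the total number of decompositions, and hence the random variable $Y_r$ has probability mass function $\Pr(Y_r=k)=p_{r,k}/g_r(1)$ for $0\le k\le r$. Consequently the normalized polynomial $g_r(y)/g_r(1)$ is exactly the probability generating function of $Y_r$; that is, $E\!\left[y^{Y_r}\right]=\sum_{k=0}^r \Pr(Y_r=k)\,y^k = g_r(y)/g_r(1)$. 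Once this identification is in place, both formulas follow from the standard differentiation rules for such generating functions, and because $g_r$ is a polynomial every term-by-term differentiation is legitimate with no convergence concerns.

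For the mean, I would differentiate $g_r(y)=\sum_k p_{r,k}y^k$ once to obtain $g_r'(y)=\sum_k k\,p_{r,k}y^{k-1}$ and evaluate at $y=1$, which yields $g_r'(1)=\sum_k k\,p_{r,k}$. Dividing by $g_r(1)$ then gives $\mu_r=E[Y_r]=\sum_k k\,\Pr(Y_r=k)=g_r'(1)/g_r(1)$, exactly as stated.

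For the variance, the key step is to extract the raw second moment. Forming $y\,g_r'(y)=\sum_k k\,p_{r,k}y^{k}$ and differentiating once more gives $\frac{d}{dy}\!\left(y\,g_r'(y)\right)=\sum_k k^2\,p_{r,k}y^{k-1}$, so that evaluation at $y=1$ recovers $\sum_k k^2\,p_{r,k}=g_r(1)\,E[Y_r^2]$. Dividing by $g_r(1)$ produces $E[Y_r^2]=\frac{d}{dy}\!\left(y\,g_r'(y)\right)|_{y=1}/g_r(1)$, and subtracting $\mu_r^2$ via $\sigma_r^2=E[Y_r^2]-\mu_r^2$ gives precisely the asserted formula for the variance.

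There is no genuine obstacle here; the only point requiring care is the bookkeeping that justifies the probabilistic interpretation. Specifically, one should check that $\{p_{r,k}/g_r(1)\}_{k}$ is a bona fide probability distribution: the $p_{r,k}$ are nonnegative integers, the support is finite (contained in $0\le k\le r$), and normalization by $g_r(1)$ forces the total mass to equal $1$. Provided $g_r(1)\neq 0$ (which holds whenever the highest root admits at least one decomposition), the identification of $g_r(y)/g_r(1)$ as a probability generating function is valid and the two moment formulas follow immediately.
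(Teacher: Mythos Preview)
Your argument is correct and is the standard derivation via probability generating functions. The paper itself does not supply a proof of this proposition; it simply cites it from \cite{DDKMMV} (Propositions~4.7 and~4.8), so there is no in-paper proof to compare against.
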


\section{Proof of Theorem~\ref{thm:general result}}\label{sec:general result}
We begin with the following type $A$ result.

\begin{proposition}\label{simpler q-analog} 
Let $r\geq 3$.
If $I\subset\{2,\ldots, r-1\}$ is a set of nonconsecutive integers and $\beta=\sum_{i=1}^r \a_i+\sum_{i\in I}\a_i$ is a weight of the Lie algebra of type $A_r$, then 
\[\wp_q(\beta)=q^{|I|+1}(1+q)^{r-1-2|I|}(2+2q+q^2)^{|I|}.\]
\end{proposition}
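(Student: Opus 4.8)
The plan is to compute $\wp_q(\beta)$ by the standard iterative/recursive method for Kostant's partition function in type $A$, exploiting the special structure of $\beta$: its coefficient on each simple root is either $1$ (at indices outside $I$) or $2$ (at indices in $I$), with $I\subset\{2,\dots,r-1\}$ nonconsecutive. First I would set up the bookkeeping: a decomposition of $\beta$ into positive roots of $A_r$ amounts to choosing a multiset of intervals $[i,j]$ (corresponding to $\alpha_i+\cdots+\alpha_j$) so that, for each simple-root index $k$, the number of chosen intervals containing $k$ equals the coefficient $m_k\in\{1,2\}$ of $\alpha_k$ in $\beta$. Since every coefficient is at most $2$, at most two intervals overlap at any point, which makes the interval picture very rigid: the intervals come in ``layers,'' and the combinatorics localizes around the indices in $I$.

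The key step is a product decomposition. Because the indices in $I$ are nonconsecutive and lie strictly between $1$ and $r$, I would argue that the generating function factors as a product over ``blocks'' determined by $I$: each index $i\in I$ (where the coefficient jumps to $2$) contributes an independent factor, and the remaining simple roots (coefficient $1$) contribute factors individually. Concretely, I would prove two local computations:
\begin{itemize}[leftmargin=1in]
\item[(i)] For the base weight $\alpha_1+\cdots+\alpha_r$ (the $I=\emptyset$ case, i.e.\ the highest root of $A_r$), show $\wp_q(\alpha_1+\cdots+\alpha_r)=q(1+q)^{r-1}$; this is the known type-$A$ highest-root formula and can be proved by a one-step recursion peeling off the simple root $\alpha_1$ (or by induction on $r$).
\item[(ii)] For a single ``doubled'' index $i$ with $1<i<r$, show that incrementing the coefficient of $\alpha_i$ from $1$ to $2$ multiplies the $q$-analog by $\frac{2+2q+q^2}{1+q}$; equivalently, analyze locally the finitely many ways the extra copy of $\alpha_i$ can be absorbed (a new length-one root $\alpha_i$, or lengthening/creating an interval straddling $i$ on the left, the right, or both).
\end{itemize}
Iterating (ii) over the $\ell=|I|$ nonconsecutive doubled indices — whose locality makes the contributions genuinely independent — turns $q(1+q)^{r-1}$ into $q(1+q)^{r-1}\left(\frac{2+2q+q^2}{1+q}\right)^{|I|}=q^{|I|+1}(1+q)^{r-1-2|I|}(2+2q+q^2)^{|I|}$, after matching the power of $q$ (each doubled index raises the minimal number of roots by one, explaining the shift from $q^1$ to $q^{|I|+1}$).

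I expect the main obstacle to be making the ``independence of the blocks'' rigorous: one must verify that the choices of how to absorb the extra copies of $\alpha_{i_1},\dots,\alpha_{i_\ell}$ really do not interact, which relies on $I$ being nonconsecutive (so no single positive root of $A_r$ can be forced to straddle two doubled indices in a way that couples them) and on $1\notin I$, $r\notin I$ (so the blocks do not run into the boundary). The cleanest way to handle this is probably a careful induction on $|I|$: strip off the largest index $i_\ell\in I$, apply the local computation (ii) there using the gap between $i_{\ell-1}$ and $i_\ell$, and invoke the inductive hypothesis on the remaining weight (which is of the same form with one fewer doubled index, now living essentially in type $A_{i_\ell-1}$ crossed with a trivial tail). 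The degree/low-order term bookkeeping in $q$ is routine once the recursive step is pinned down.
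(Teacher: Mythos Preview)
Your final strategy is exactly the paper's: induct on $|I|$, peel off $j=\max(I)$, and split into the case where the simple root $\alpha_j$ appears in the decomposition (giving $q$ times the inductive value for $I'=I\setminus\{j\}$) versus the case where it does not (forcing the two copies of $\alpha_j$ into intervals $[a,j]$ and $[j,b]$, which factors the problem across~$j$ into an $A_{j-1}$ piece carrying $I'$ and an $A_{r-j}$ tail). One correction to your step~(ii): the multiplicative factor is $\dfrac{q(2+2q+q^2)}{(1+q)^2}$, not $\dfrac{2+2q+q^2}{1+q}$; as written, your displayed identity $q(1+q)^{r-1}\bigl(\tfrac{2+2q+q^2}{1+q}\bigr)^{|I|}=q^{|I|+1}(1+q)^{r-1-2|I|}(2+2q+q^2)^{|I|}$ is false --- the $(1+q)$ exponent is off by~$|I|$, not just the power of~$q$.
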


\begin{proof}
If $I=\emptyset$, then the result follows from the fact that  $\wp_q(\sum_{i=1}^r\a_i)=q(1+q)^{r-1}$. 

Suppose the formula holds for any indexing set with cardinality $n$. Consider $I$ where $|I|=n+1$ and $j=\max(I)$. Let $I'=I-\{j\}.$ First consider the case where the additional $\alpha_j$ appears as a simple root. The number of such decompositions of $\lambda$ is 
\begin{align}
q\cdot q^{n+1}(1+q)^{r-1-2n}(2+2q+q^2)^n= q^{n+2}(1+q)^{r-1-2n}(2+2q+q^2)^n,
\label{p0}    
\end{align}
where the factor of $q$ accounts for the $\a_j$ appearing as a simple root, and, by the induction hypothesis, the remaining factors are associated with taking $I'$ as the indexing set.

Next, consider the case where $\alpha_j$ does not appear as a simple root. We treat the roots $\alpha'=\alpha_{j-1}+\alpha_j$ and $\alpha''=\alpha_j+\alpha_{j+1}$ as quasi-simple roots. In other words, they cannot be separated for this count of the decompositions. Thus it suffices to find the number of ways to write $\sum_{i=1}^{j-2} \alpha_i + \sum_{i\in I'} \alpha_i + (\alpha')$ and $\sum_{i=j+2}^r \alpha_i + (\alpha'')$ as sums of positive roots, and take the product of the results.

For $\sum_{i=1}^{j-2} \alpha_i + \sum_{i\in I'} \alpha_i + (\alpha')$, we use the inductive hypothesis (acting like in type $A_{j-1}$ and treating $\a'$ as $\a_{j-1}$) to get
\begin{align}
    q^{n+1}(1+q)^{j-1-1-2n}(2+2q+q^2)^n.
\label{p1}
\end{align}
For $\sum_{i=j+2}^{r} \alpha_i + (\alpha'')$, we use the base case (acting like in type $A_{r-j}$ and treating $\a''$ as $\a_{j+1}$) to get
\begin{align}
q(1+q)^{r-j-1}.
\label{p2}
\end{align}
Then, the number of ways that $\alpha_j$ does not appear as a simple root in the decompositions of $\beta$ is obtained by taking the product of Equations \eqref{p1} and \eqref{p2} which yields
\begin{align}
q^{n+2}(1+q)^{r-3-2n}(2+2q+q^2)^n.
\label{p3}
\end{align}
Thus the number of decompositions of $\beta$ must account for the cases where $\alpha_j$ appears as a simple root and where is does not appear as a simple root. This is given by taking the sum of Equations \eqref{p0} and \eqref{p3} which yields
\begin{align*}
\wp_q(\beta)&=q^{n+2}(1+q)^{r-1-2n}(2+2q+q^2)^n+q^{n+2}(1+q)^{r-3-2n}(2+2q+q^2)^n\\ 
&= q^{n+2}(1+q)^{r-3-2n}(2+2q+q^2)^n\left[(1+q)^2+1\right]\\
&= q^{n+2}(1+q)^{r-3-2n}(2+2q+q^2)^{n+1}\\
&= q^{(n+1)+1}(1+q)^{r-1-2(n+1)}(2+2q+q^2)^{n+1}
\end{align*}
as desired.
\end{proof}

We now give a more general result.

\begin{proposition}\label{prop:new q-analog formula}
Let  $r\geq 3$ and let 
$I=\{i_1,i_2,\ldots, i_\ell\}$ be a set of nonconsecutive integers satisfying $1< i_1<i_2<\cdots<i_\ell<  r$, and  $c_{i_1},c_{i_2},\ldots,c_{i_\ell}\in \mathbb{Z}_{>0}$. If
\[\xi=\sum_{i=1}^r\alpha_{i}+
\sum_{j=1}^{\ell}c_{i_j}\alpha_{i_j}\]
is a weight of the Lie algebra of type $A_r$,
then 
\[\wp_q(\xi)=q^{m+1} (1 + q)^{r-1-2\ell} (2 + 2 q + q^2)^\ell\]
where $m=\sum_{j=1}^{\ell} c_{i_j}$. 
\end{proposition}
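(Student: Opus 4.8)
The plan is to reduce Proposition~\ref{prop:new q-analog formula} to Proposition~\ref{simpler q-analog} by an induction on $m = \sum_{j=1}^\ell c_{i_j}$. The base case $m = \ell$, i.e. $c_{i_j} = 1$ for all $j$, is exactly Proposition~\ref{simpler q-analog} (after noting that the hypothesis $1 < i_1 < \cdots < i_\ell < r$ together with nonconsecutiveness places $I \subseteq \{2,\ldots,r-1\}$, so the earlier result applies verbatim). For the inductive step, I would pick an index $i_t \in I$ with $c_{i_t} \geq 2$ and peel off one copy of $\alpha_{i_t}$ from $\xi$, setting $\xi' = \xi - \alpha_{i_t}$; then $\xi'$ has the same shape with $c_{i_t}$ replaced by $c_{i_t} - 1$ and hence $m$ replaced by $m-1$, so the inductive hypothesis gives $\wp_q(\xi') = q^{m}(1+q)^{r-1-2\ell}(2+2q+q^2)^\ell$ (the support $I$ is unchanged as long as $c_{i_t} - 1 \geq 1$).

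The heart of the argument is a lemma saying that adjoining one extra copy of a simple root $\alpha_{i_t}$, whose coefficient in $\xi'$ is already at least $1$, multiplies the $q$-analog by exactly $q$. Intuitively, in any decomposition of $\xi'$ the simple root $\alpha_{i_t}$ already appears "with multiplicity one in the sense of the $\{1,2\}$-support pattern": because the coefficient on $\alpha_{i_t-1}$ and $\alpha_{i_t+1}$ is $1$, no positive root of type $A_r$ spanning past $i_t$ can carry two units of $\alpha_{i_t}$, so the only room to place the extra $\alpha_{i_t}$ is as an additional standalone simple root $\alpha_{i_t}$. More carefully, I would set up a bijection between decompositions of $\xi$ and decompositions of $\xi'$ that preserves the number of parts up to a shift of $1$: given a decomposition of $\xi$, remove one copy of the part $\alpha_{i_t}$ (I must argue such a part exists), obtaining a decomposition of $\xi'$ with one fewer part; conversely, to any decomposition of $\xi'$ append a part $\alpha_{i_t}$. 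This bijection yields $\wp_q(\xi) = q \cdot \wp_q(\xi')$, and combining with the inductive hypothesis finishes the proof.

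The step I expect to be the main obstacle is proving that in every decomposition of $\xi$ the simple root $\alpha_{i_t}$ occurs at least once as a part; equivalently, that the extra unit of $\alpha_{i_t}$ cannot be "absorbed" into longer positive roots. This needs a careful look at the positive roots of type $A_r$: a positive root is $\alpha_a + \alpha_{a+1} + \cdots + \alpha_b$, contributing $1$ to each of $\alpha_a,\ldots,\alpha_b$. In $\xi$ the coefficient of $\alpha_{i_t-1}$ is $1$ (since $i_t - 1 \notin I$ as $I$ is nonconsecutive, and $i_t - 1 \geq 1$) and likewise the coefficient of $\alpha_{i_t+1}$ is $1$ unless $i_t + 1 = i_t$... more precisely one checks $i_t + 1 \notin I$, so that coefficient is $1$ as well (the edge case $i_t = r-1$, giving coefficient $0$ on $\alpha_r$ only in types other than $A$, does not arise here since we are in type $A$ and $i_t < r$). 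Hence at most one part of the decomposition can be a positive root whose support contains $i_t - 1$ (call it the "left-spanning" part) and at most one can contain $i_t + 1$; any part supported on an interval containing $i_t$ but neither $i_t-1$ nor $i_t+1$ must be $\alpha_{i_t}$ itself. Counting the total coefficient of $\alpha_{i_t}$, which is $c_{i_t} \geq 2$, against the at-most-two spanning parts forces at least $c_{i_t} - 2 + (\text{something}) \geq 1$ copies of the bare simple root $\alpha_{i_t}$; making this bookkeeping airtight — in particular handling whether the left- and right-spanning parts coincide (a part spanning through $i_t-1, i_t, i_t+1$) — is the delicate point. Once this structural fact is established, the bijection and the arithmetic identity $q^{m+1} = q \cdot q^m$ complete the induction with no further difficulty.
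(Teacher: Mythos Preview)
Your approach is essentially the same as the paper's: the paper reduces to Proposition~\ref{simpler q-analog} in a single step by noting that the extra $c_{i_j}-1$ copies of each $\alpha_{i_j}$ beyond the first must appear as standalone simple roots, so $\wp_q(\xi)=q^{m-\ell}\,\wp_q(\beta)$, whereas you carry out the same peeling one copy at a time by induction on $m$. One small correction to your bookkeeping: the coefficient of $\alpha_{i_t}$ in $\xi$ is $1+c_{i_t}$ (not $c_{i_t}$), so the number of forced standalone $\alpha_{i_t}$-parts is at least $(1+c_{i_t})-2=c_{i_t}-1\geq 1$, and no extra ``$+(\text{something})$'' is needed.
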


\begin{proof}
The result follows from taking the formula in Proposition~\ref{simpler q-analog} and multiplying by $q^{\sum_{i=1}^\ell (c_i-1)}$, which accounts for the additional simple roots that we  must use to decompose $\xi$ than what we needed to decompose $\beta$. Hence
\begin{align*}
    \wp_q(\xi)&=q^{\sum_{i=1}^\ell (c_i-1)}\cdot q^{\ell+1}(1+q)^{r-1-2\ell}(2+2q+q^2)^{\ell}\\
    &=q^{m-\ell}\cdot q^{\ell+1}(1+q)^{r-1-2\ell}(2+2q+q^2)^{\ell}\\
    &=q^{m+1}(1+q)^{r-1-2\ell}(2+2q+q^2)^{\ell}.\qedhere
\end{align*}
\end{proof}

By further restricting the set $I$ we can give a general result for all Lie types. 
\begin{proposition}\label{prop:all Lie types}
Let $\mathfrak{g}$ be a classical simple Lie algebra of rank $r\geq 5$.
Let 
$I=\{i_1,i_2,\ldots, i_\ell\}$ be a set of nonconsecutive integers satisfying $1< i_1<i_2<\cdots<i_\ell<  r-2$, and  $c_{i_1},c_{i_2},\ldots,c_{i_\ell}\in \mathbb{Z}_{>0}$. If
\[\lambda=\sum_{i=1}^r\alpha_{i}+
\sum_{j=1}^{\ell}c_{i_j}\alpha_{i_j}\]
is a weight of $\mathfrak{g}$,
then 
\[\wp_q(\lambda)=q^{m+1} (1 + q)^{r-1-2\ell} (2 + 2 q + q^2)^\ell\]
where $m=\sum_{j=1}^{\ell} c_{i_j}$. 
\end{proposition}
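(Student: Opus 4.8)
The plan is to deduce all four types from the type‑$A$ identity of Proposition~\ref{prop:new q-analog formula}. Type $A_r$ is immediate, since the hypothesis $i_\ell<r-2<r$ is stronger than what Proposition~\ref{prop:new q-analog formula} requires. In every other type the decisive feature of $1<i_1<\cdots<i_\ell<r-2$ is that it forces $1,r-2,r-1,r\notin I$, so that $\lambda$ has coefficient exactly $1$ on each of $\a_1,\a_{r-2},\a_{r-1},\a_r$ (and, more generally, on every $\a_i$ with $i\notin I$). For types $B_r$ and $C_r$ I would then observe that $\Phi^+_{A_r}\subseteq\Phi^+$ and that every positive root outside $\Phi^+_{A_r}$ carries a coefficient $2$ --- on $\a_r$ in type $B_r$, and on $\a_{r-1}$ in type $C_r$ (including $\hroot_{C_r}$) --- so no such root can occur in a decomposition of $\lambda$. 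Hence every decomposition of $\lambda$ is a decomposition in type $A_r$, $\wp_q(\lambda)=\wp_q^{A_r}(\lambda)$, and Proposition~\ref{prop:new q-analog formula} applies.

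Type $D_r$ is the substantive case, because $D_r$ has positive roots outside $\Phi^+_{A_{r-1}}$ with all coefficients $\le 1$ --- namely $\a_r$, the roots $\a_i+\cdots+\a_{r-2}+\a_r$, and the roots $\a_i+\cdots+\a_{r-2}+\a_{r-1}+\a_r$ for $1\le i\le r-2$ --- which are not excluded on coefficient grounds. I would argue in three steps. (i) Every positive root of $D_r$ that carries a coefficient $2$ carries it on $\a_{r-2}$, hence none can occur; so the roots available for $\lambda$ are precisely $\Phi^+_{A_{r-1}}$ together with the three families just listed. (ii) Exactly as in the passage from Proposition~\ref{simpler q-analog} to Proposition~\ref{prop:new q-analog formula}, in any decomposition of $\lambda$ each $\a_i$ with $i\in I$ occurs as a bare simple root at least $c_i-1$ times: any available root through $\a_i$ that is not $\a_i$ itself also passes through $\a_{i-1}$ or $\a_{i+1}$, and here $i+1\le r-2$ keeps this entirely to the left of the branch, where $\a_{i-1}$ and $\a_{i+1}$ have coefficient $1$ in $\lambda$; removing these forced simple roots gives $\wp_q(\lambda)=q^{m-\ell}\wp_q(\lambda_0)$ with $\lambda_0=\sum_{i=1}^r\a_i+\sum_{i\in I}\a_i$. (iii) It then remains to prove $\wp_q(\lambda_0)=q^{\ell+1}(1+q)^{r-1-2\ell}(2+2q+q^2)^\ell$, which I would establish by induction on $\ell$.

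The base case $\ell=0$ reduces to the identity $\wp_q^{D_r}\bigl(\sum_{i=1}^r\a_i\bigr)=q(1+q)^{r-1}$, which I would get by conditioning on the unique positive root covering $\a_r$: it is $\a_r$, or $\a_i+\cdots+\a_{r-2}+\a_r$, or $\a_i+\cdots+\a_{r-2}+\a_{r-1}+\a_r$ for some $1\le i\le r-2$. In each case the remainder is decomposed in a smaller type $A$ (with a forced bare simple root $\a_{r-1}$ in the middle case), and summing the three contributions $q^2(1+q)^{r-2}$, $q^2(1+q)^{r-3}$, $q(1+q)^{r-3}$ --- the latter two after a geometric sum over the starting index --- yields $q(1+q)^{r-1}$ via $q^2(1+q)+q^2+q=q(1+q)^2$. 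For the inductive step, put $j=\max I$; since $i_\ell<r-2$ we have $2\le j\le r-3$, so the quasi-simple-root splitting of Proposition~\ref{simpler q-analog} at $\a_j$ can be performed strictly to the left of the branch node $\a_{r-2}$. Conditioning on whether $\a_j$ appears as a bare simple root, the ``yes'' branch contributes $q\,\wp_q^{D_r}(\lambda_0-\a_j)$ and is dispatched by the induction hypothesis (indexing set $I\setminus\{j\}$), while the ``no'' branch factors --- with $\a':=\a_{j-1}+\a_j$ and $\a'':=\a_j+\a_{j+1}$ treated as quasi-simple --- as a type $A_{j-1}$ decomposition of $\sum_{i\le j-2}\a_i+\sum_{i\in I\setminus\{j\}}\a_i+\a'$ (evaluated by Proposition~\ref{simpler q-analog}) times a type $D_{r-j}$ decomposition of $\a''+\sum_{i=j+2}^r\a_i$ (evaluated by the base case, using $D_3\cong A_3$ when $r-j=3$). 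Adding the two branches and simplifying with $(1+q)^2+1=2+2q+q^2$ closes the induction; combining with step (ii) finishes type $D_r$.

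The step I expect to be the main obstacle is the type‑$D_r$ base case $\wp_q^{D_r}\bigl(\sum_{i=1}^r\a_i\bigr)=q(1+q)^{r-1}$ together with the accompanying justification that the quasi-simple-root splitting may be carried out at $\a_{\max I}$: the splitting is legitimate precisely because $\max I<r-2$ places it strictly to the left of the branch node $\a_{r-2}$ --- the one place where $B_r,C_r,D_r$ differ from $A_r$ in a way that affects these weights --- while the base-case bookkeeping (three conditioning cases, the isolated $\a_{r-1}$, the geometric sums over the starting index) is where the actual computation, however routine, has to be done.
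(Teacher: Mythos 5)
Your proof is correct, and for types $B_r$ and $C_r$ it coincides with the paper's argument: every positive root outside the type-$A_r$ subsystem carries a coefficient $2$ (on $\alpha_r$, resp.\ $\alpha_{r-1}$), so Proposition~\ref{prop:new q-analog formula} applies verbatim. For type $D_r$ you take a genuinely different route, and this is where your write-up earns its keep: the paper disposes of all types with the single assertion that ``the only positive roots one can use in decompositions of $\lambda$ are of type $A_r$,'' and that assertion fails in type $D_r$. The roots $\alpha_i+\cdots+\alpha_{r-2}+\alpha_r$ (and the roots $\alpha_i+\cdots+\alpha_{r-2}+\alpha_{r-1}+\alpha_r$, which are genuine positive roots of $D_r$ although the list in Section~\ref{sec:background} omits them) have all coefficients at most $1$ and do occur: already for $\lambda=\sum_{i=1}^r\alpha_i$ one has the decomposition $(\alpha_{r-2}+\alpha_r)+\alpha_{r-1}+\alpha_1+\cdots+\alpha_{r-3}$. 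So the conclusion of the proposition is true in type $D_r$ but not for the paper's stated reason, and your three-step treatment --- discarding only the coefficient-$2$ roots, extracting the factor $q^{m-\ell}$ exactly as in Proposition~\ref{prop:new q-analog formula}, and rerunning the quasi-simple-root induction of Proposition~\ref{simpler q-analog} against a new base case $\wp_q\bigl(\sum_{i=1}^r\alpha_i\bigr)=q(1+q)^{r-1}$ obtained by conditioning on the root covering $\alpha_r$ --- is a sound repair; I checked that $q^2(1+q)^{r-2}+q^2(1+q)^{r-3}+q(1+q)^{r-3}=q(1+q)^{r-1}$ and that $\max I<r-2$ is exactly what keeps the splitting at $\alpha_{\max I}$ to the left of the branch node. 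What your approach buys is an actual proof in type $D_r$ where the paper has a gap; what it costs is the extra induction, which you could shorten by noting that the coefficient-$\le 1$ positive roots supported below $\sum_{i=1}^r\alpha_i$ are precisely the connected subtrees of the Dynkin diagram, so that decompositions of $\sum_{i=1}^r\alpha_i$ correspond to subsets of the $r-1$ edges of a tree and the base case $q(1+q)^{r-1}$ holds uniformly in all four types.
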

\begin{proof}
The result follows from Proposition~\ref{prop:new q-analog formula} and the fact that under this restriction on the index set $I$, the only positive roots one can use in decompositions of $\lambda$ are of type $A_r$.
\end{proof}

Setting $q=1$ in Proposition~\ref{prop:all Lie types} establishes the following result.
\begin{corollary}\label{cor:total formula}
Let $I$ and $\lambda$ be defined as in Proposition~\ref{prop:all Lie types}. Then
$\wp(\lambda)=2^{r-1}\left(\frac{5}{4}\right)^\ell$.
\end{corollary}

\begin{proposition}\label{prop:general result}Let $\lambda$
be defined as in Proposition~\ref{prop:all Lie types}.
If $Y_{r}$ denotes the random variable for the total number of positive roots used in the decompositions of $\lambda$, then 
the mean and variance of $Y_{r}$ are given by 
\[\mu_r=\frac{r+1}{2}-\frac{1}{5}
\ell+m\qquad\mbox{ and }\qquad \sigma_r^2=\frac{r-1}{4}+ \frac{3}{50}\ell,\]
respectively. \end{proposition}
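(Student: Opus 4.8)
The plan is to extract the mean and variance directly from the closed form
\[
\wp_q(\lambda)=q^{m+1}(1+q)^{r-1-2\ell}(2+2q+q^2)^\ell
\]
obtained in Proposition~\ref{prop:all Lie types}, viewing the (unnormalized) coefficients of this polynomial as the weights $p_{r,k}$ of the random variable $Y_r$. The key observation is that the probability generating function of $Y_r$ is
\[
G_r(q)=\frac{\wp_q(\lambda)}{\wp_1(\lambda)},
\]
and since $\wp_q(\lambda)$ factors as a product, $G_r(q)$ is the product of the probability generating functions of independent contributions. First I would write $Y_r$ as a sum of independent random variables: a deterministic shift of $m+1$ coming from the factor $q^{m+1}$; a sum of $r-1-2\ell$ independent Bernoulli$(1/2)$ variables coming from $(1+q)^{r-1-2\ell}$ (each factor $(1+q)/2$ is the pgf of a Bernoulli$(1/2)$); and $\ell$ independent copies of a variable $Z$ with pgf $(2+2q+q^2)/5$ coming from $(2+2q+q^2)^\ell$.

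The second step is the routine computation of the mean and variance of the single building-block $Z$ whose pgf is $h(q)=(2+2q+q^2)/5$. One finds $\mathbb{E}[Z]=h'(1)=(2+2)/5=4/5$ and $\mathbb{E}[Z(Z-1)]=h''(1)=2/5$, so $\mathbb{E}[Z^2]=2/5+4/5=6/5$ and $\operatorname{Var}(Z)=6/5-16/25=14/25$. Wait — I should double-check: the paper claims the $\ell$-dependent part of the variance is $3\ell/50$, so each $Z$ should contribute $3/50$; let me recompute $\operatorname{Var}(Z)$ carefully from $h$ when I write this up, being mindful that there may be a normalization subtlety (the factor $2+2q+q^2$ evaluated at $q=1$ is $5$, but the relevant object may be $(2+2q+q^2)/4$ depending on how one groups the $1/2$ factors, which changes the answer). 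Similarly each Bernoulli$(1/2)$ has mean $1/2$ and variance $1/4$.

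The third step assembles these by additivity of mean and variance over independent summands:
\[
\mu_r=(m+1)+(r-1-2\ell)\cdot\tfrac12+\ell\,\mathbb{E}[Z]
=\frac{r+1}{2}-\frac{1}{5}\ell+m,
\]
which forces $\mathbb{E}[Z]=2/5$ (so the correct building block has pgf $(2+2q+q^2)/4$, i.e.\ the Bernoulli factors should be distributed as $(1+q)^{r-1-2\ell}(2+2q+q^2)^\ell/2^{r-1}$ grouped so that each of the $\ell$ blocks carries a $1/4$), and
\[
\sigma_r^2=(r-1-2\ell)\cdot\tfrac14+\ell\operatorname{Var}(Z)=\frac{r-1}{4}+\frac{3}{50}\ell,
\]
which forces $\operatorname{Var}(Z)=1/4+3/50$... hmm, that is not matching a clean fraction either, so the grouping must be different. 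The cleanest route, and the one I would actually carry out in the final proof, is to bypass the probabilistic interpretation and instead apply Proposition~\ref{prop:1} (or the standard cumulant formulas) directly: take logarithms, $\log\wp_q(\lambda)=(m+1)\log q+(r-1-2\ell)\log(1+q)+\ell\log(2+2q+q^2)$, differentiate, and evaluate the logarithmic derivatives at $q=1$ to read off $\mu_r$ and $\sigma_r^2$. Because $\log\wp_q$ is a sum, its derivatives are sums of the derivatives of each term, so the computation reduces to differentiating $\log q$, $\log(1+q)$, and $\log(2+2q+q^2)$ twice and plugging in $q=1$ — giving contributions $1$, $1/2$, $4/5$ to the mean (total $m+1+(r-1-2\ell)/2+4\ell/5=(r+1)/2-\ell/5+m$, using $-\ell+4\ell/5=-\ell/5$) and $0$, $1/4$, $\frac{d}{dq}\frac{2+2q}{2+2q+q^2}\big|_{q=1}=\frac{2\cdot5-4\cdot4}{25}=-\frac{6}{25}$ to the variance; then $\sigma_r^2=(r-1-2\ell)/4+\ell(14/25-4/5)=\dots$. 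I expect the only real obstacle is keeping the arithmetic of these three logarithmic-derivative contributions straight so that the $\ell$-terms collapse to exactly $-\ell/5$ and $3\ell/50$; everything else is formal.
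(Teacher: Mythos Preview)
Your final approach---take $\log\wp_q(\lambda)$, differentiate, and evaluate at $q=1$---is exactly what the paper does: its proof is the single line ``follows from Proposition~\ref{prop:1} and Corollary~\ref{cor:total formula}.'' So the method is right; what went wrong is only arithmetic, and it happened in your \emph{checking} steps, not in your initial computations.

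Your first, probabilistic decomposition was already correct and you abandoned it unnecessarily. With $Z$ having pgf $(2+2q+q^2)/5$ you correctly found $\mathbb{E}[Z]=4/5$ and $\operatorname{Var}(Z)=14/25$. The slip came when you tried to reverse-engineer these from the target: from
\[
(m+1)+\tfrac12(r-1-2\ell)+\ell\,\mathbb{E}[Z]=\tfrac{r+1}{2}-\tfrac{\ell}{5}+m
\]
the left side is $\tfrac{r+1}{2}+m-\ell+\ell\,\mathbb{E}[Z]$, so $\mathbb{E}[Z]=4/5$, not $2/5$ (you dropped the $-\ell$ coming from the $-2\ell$ inside $r-1-2\ell$). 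Likewise for the variance,
\[
\tfrac14(r-1-2\ell)+\ell\operatorname{Var}(Z)=\tfrac{r-1}{4}-\tfrac{\ell}{2}+\ell\operatorname{Var}(Z),
\]
so matching with $\tfrac{r-1}{4}+\tfrac{3\ell}{50}$ forces $\operatorname{Var}(Z)=\tfrac12+\tfrac{3}{50}=\tfrac{14}{25}$, exactly your original value. There is no normalization subtlety: the $(2+2q+q^2)/5$ block is the right one, and the $\ell$-coefficients $-\ell/5$ and $3\ell/50$ arise because the $(1+q)$ factors also carry an $\ell$-dependence through the exponent $r-1-2\ell$. Once you fix this bookkeeping, either route (independent summands or logarithmic derivatives) finishes the proof in two lines.
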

\begin{proof}
The result follows from Proposition~\ref{prop:1} and Corollary~\ref{cor:total formula}.
\end{proof}

\begin{proposition}\label{prop:gen Gaussian}

Let $\mu_r$ and $\sigma_r^2$ be defined as in Proposition~\ref{prop:general result} and $\lambda$ be defined as in Proposition~\ref{prop:all Lie types}. Then the random variable $Y_{r}' = \left(Y_r - \mu_{r}\right)/\sigma_{r}$ converges in distribution to the standard normal distribution as $r \rightarrow \infty$.

\end{proposition}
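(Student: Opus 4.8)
The plan is to prove asymptotic normality of $Y_r'$ via the standard generating-function machinery, using the explicit factored form of $\wp_q(\lambda)$ supplied by Proposition~\ref{prop:all Lie types}. The key observation is that $\wp_q(\lambda) = q^{m+1}(1+q)^{r-1-2\ell}(2+2q+q^2)^\ell$ is, up to the monomial prefactor $q^{m+1}$ (which merely shifts $Y_r$ by the constant $m+1$ and does not affect convergence to a normal distribution), a \emph{product of independent} building blocks. Concretely, writing $g_r(q) = \wp_q(\lambda)$, the normalized probability generating function of $Y_r$ is $g_r(q)/g_r(1)$, and since $\ell$ is fixed while $r\to\infty$, I would decompose $Y_r$ as a sum $Y_r = (m+1) + \sum_{j=1}^{r-1-2\ell} B_j + \sum_{k=1}^{\ell} T_k$, where the $B_j$ are i.i.d.\ Bernoulli$(1/2)$ random variables (from each factor $(1+q) = 1 + q$, normalized) and the $T_k$ are i.i.d.\ random variables on $\{0,1,2\}$ with probabilities $\tfrac12,\tfrac12\cdot 0\!+\!\ldots$ — more precisely with generating function $(2+2q+q^2)/5$, i.e.\ $P(T=0)=\tfrac25$, $P(T=1)=\tfrac25$, $P(T=2)=\tfrac15$ — all mutually independent.

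The main steps, in order, are as follows. First, I would record that $g_r(1) = 2^{r-1-2\ell}\cdot 5^\ell$ (equivalently $2^{r-1}(5/4)^\ell$ as in Corollary~\ref{cor:total formula}), so that $g_r(q)/g_r(1)$ factors as claimed, exhibiting $Y_r$ (minus its deterministic shift) as an independent sum. Second, I would invoke the Lindeberg–Feller central limit theorem (or simply the classical CLT together with a Slutsky-type argument for the fixed finite number of $T_k$ terms): the $r-1-2\ell$ i.i.d.\ Bernoulli summands dominate as $r\to\infty$, their partial sums are exactly asymptotically normal after centering and scaling, and adding the fixed finite sum $\sum_{k=1}^\ell T_k$ of bounded random variables plus the constant $m+1$ perturbs the mean and variance only by $O(1)$ while contributing nothing to the limiting shape. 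Third, I would check that the centering and scaling constants produced this way agree with $\mu_r$ and $\sigma_r^2$ of Proposition~\ref{prop:general result}; since $\mathbb{E}[B_j]=\tfrac12$, $\operatorname{Var}(B_j)=\tfrac14$, $\mathbb{E}[T_k]=\tfrac45$, $\operatorname{Var}(T_k)=\tfrac{3}{25}$, one gets $\mu_r = (m+1) + \tfrac{r-1-2\ell}{2} + \tfrac{4\ell}{5} = \tfrac{r+1}{2} - \tfrac{\ell}{5} + m$ and $\sigma_r^2 = \tfrac{r-1-2\ell}{4} + \tfrac{3\ell}{25} = \tfrac{r-1}{4} + \tfrac{3}{50}\ell$, matching exactly, so $Y_r'$ as defined is precisely the standardized independent sum.

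Alternatively — and this is perhaps the cleanest route to write up — I would appeal directly to the quasi-power / Bender-type criterion: if $g_r(q)/g_r(1)$ can be written as $h(q)^{N(r)} \cdot k_r(q)$ where $h$ is a fixed polynomial with $h(1)=1$, $h'(1)\ne 0$ (so the variance grows linearly), $N(r)\to\infty$, and $k_r$ is a fixed (or uniformly bounded-degree) factor with $k_r(1)=1$, then the associated random variable is asymptotically Gaussian. Here $h(q) = (1+q)/2$, $N(r) = r-1-2\ell \to\infty$, and $k_r(q) = q^{m+1}\big((2+2q+q^2)/5\big)^\ell$ is literally independent of $r$. The hypotheses of Bender's theorem (analyticity, the non-degeneracy condition $h''(1)+h'(1)-h'(1)^2 \ne 0$ for the variance, and the bounded correction factor) are immediate from these explicit polynomials, so the conclusion follows with no real computation.

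The step I expect to require the most care is purely bookkeeping rather than conceptual: verifying that the monomial prefactor $q^{m+1}$ and the $\ell$ "triangle" factors $(2+2q+q^2)$ genuinely contribute only an $O(1)$ correction and do not interfere with the limit — i.e.\ making the Slutsky argument (or the bounded-factor hypothesis of Bender's theorem) rigorous and confirming the mean/variance match. Since $\ell$ and $m$ are held fixed as $r\to\infty$, this is routine, but it is the one place where one must be explicit. Everything else is a direct application of a standard CLT (or Bender's theorem) to an explicitly factored generating function.
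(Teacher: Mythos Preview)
Your approach is correct and takes a genuinely different route from the paper. The paper argues directly with the moment generating function: it sets $g_r(y)=y^{m+1}(1+y)^{r-1-2\ell}(2+2y+y^2)^\ell$, Taylor-expands $\log g_r(e^n)$ about $n=0$ to second order, substitutes $n=t/\sigma_r$, and verifies by explicit computation that $\log M_{Y_r'}(t)\to t^2/2$ as $r\to\infty$. You instead exploit the factorization probabilistically, writing $Y_r$ as a deterministic shift plus $r-1-2\ell$ i.i.d.\ Bernoulli$(1/2)$ variables plus $\ell$ independent bounded variables, and then invoke the classical CLT together with Slutsky (or, in your alternative, a quasi-power/Bender criterion with $h(q)=(1+q)/2$ and a fixed correction factor). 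Your argument is more conceptual and makes transparent \emph{why} the limit is Gaussian---the growing Bernoulli block dominates and the remaining $O(1)$ summands wash out after scaling---while the paper's is a self-contained calculation that never names the independent-sum structure.

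One arithmetic slip to fix: for $T_k$ with probability generating function $(2+2q+q^2)/5$ you correctly have $\mathbb{E}[T_k]=4/5$, but $\mathbb{E}[T_k^2]=0\cdot\tfrac{2}{5}+1\cdot\tfrac{2}{5}+4\cdot\tfrac{1}{5}=\tfrac{6}{5}$, hence $\operatorname{Var}(T_k)=\tfrac{6}{5}-\tfrac{16}{25}=\tfrac{14}{25}$, not $\tfrac{3}{25}$. With this correction,
\[
\sigma_r^2=\frac{r-1-2\ell}{4}+\frac{14\ell}{25}=\frac{r-1}{4}+\ell\Bigl(-\tfrac{1}{2}+\tfrac{14}{25}\Bigr)=\frac{r-1}{4}+\frac{3\ell}{50},
\]
matching Proposition~\ref{prop:general result}. (As written, your intermediate expression $\tfrac{r-1-2\ell}{4}+\tfrac{3\ell}{25}$ actually simplifies to $\tfrac{r-1}{4}-\tfrac{19\ell}{50}$, so the displayed equality is false even though the final answer you quote is correct.) This does not affect the soundness of the method.
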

\begin{proof}
By Proposition~\ref{prop:all Lie types}, we let $g_r(y)=y^{m+1} (1 + y)^{r-1-2\ell} (2 + 2 y + y^2)^\ell$; hence
\begin{align}\nonumber
    \log[g_r(e^n)]&=\log[(e^n)^{m+1} (1 + e^n)^{r-1-2\ell} (2 + 2 e^n + e^{2n})^\ell]\\\nonumber
    &=(m+1)\log(e^n)+(r-1-2\ell)\log (1 + e^n)+\ell
    \log (2 + 2 e^n + e^{2n})\\
    &=(m+1)\log(1+n+n^2/2)+(r-1-2\ell)\log (2+n+n^2/2) \label{eq:sub here} \\&\qquad+\ell
    \log (5 + 4 n + 3 n^2)+O(n^3).\nonumber
\end{align}
Using Taylor's series expansion for $\log(x)$ we have 
\begin{align}\label{eq:term1}
    \log(1+n+n^2/2)&=n+O(n^3)\\\label{eq:term2}
    \log(2+n+n^2/2)&=\log(2)+\frac{1}{2}n+\frac{1}{8}n^2+O(n^3)\\\label{eq:term3}
    \log(5 + 4 n + 3 n^2)&=\log(5)+\frac{4}{5}n-\frac{1}{50}n^2+O(n^3).
\end{align}
Substituting Equations \eqref{eq:term1}, \eqref{eq:term2}, and \eqref{eq:term3}, into Equation \eqref{eq:sub here} yields
\begin{align}\label{eq:formula for gr}
    \log[g_r(e^n)]&= n^2 \left(\frac{r-1}{8}-\frac{27\ell}{100}\right) + n \left(\frac{r+1}{2}-\frac{1}{5}\ell+m\right) + 
 \ell \log\left(\frac{5}{4}\right) + ( r-1) \log(2)+O(n^3).
\end{align}
By Corollary~\ref{cor:total formula} we know 
\begin{align}
    \label{eq:Formula for gr(1)}
    \log[g_r(1)]=\log\left[2^{r-1}\left(\frac{5}{4}\right)^\ell\right]=\ell\log\left(\frac{5}{4}\right)+(r-1)\log(2).
\end{align}
Hence, substituting Equations \eqref{eq:formula for gr} and \eqref{eq:Formula for gr(1)}, $\mu_r=\frac{r+1}{2}-\frac{1}{5}\ell+m$, $\sigma_{r}=\sqrt{\frac{r-1}{4}+ \frac{3}{50}\ell}$, and $n=\frac{t}{\sigma_r}$ yields
\begin{align}\nonumber
    \log(M_{Y_r'}(t))&=\log[g_r(e^n)]-\log[g_r(1)]-\frac{t\mu_r}{\sigma_r}\\
    &=\left(\frac{25 + 54 \ell - 25 r} {50 - 12 \ell - 50 r}\right)t^2+O\left(\left(\frac{t}{\sqrt{\frac{r-1}{4}+\frac{3}{50}\ell}}\right)^3\right).\label{eq:take limit here}
\end{align}
Taking the limit of Equation \eqref{eq:take limit here}, as $r\to\infty$, we have that $\log(M_{Y_r'}(t))$ converges to $\frac{1}{2}t^2$. Thus $Y_r'$ converges to the standard normal distribution as $r\to\infty$.
\end{proof}

Theorem~\ref{thm:general result} follows directly from Proposition~\ref{prop:all Lie types} and Proposition~\ref{prop:gen Gaussian}.

\section{Proof of Theorem~\ref{thm:type specific}}\label{sec:Lie type results}
In type $A_r$ we know
\begin{align}
\wp_q\left(\hroot\right)&=q\left(1+q\right)^{r-1}=q\displaystyle\sum_{i=0}^{r-1}\binom{r-1}{i}q^{i}=\displaystyle\sum_{k=1}^{r}\binom{r-1}{k-1}q^k.\label{eq1}
\end{align}
It is well-known that the binomial distribution 
converges to a standard normal, see \cite{MR3731462} for four distinct proofs of this result. 

Harris, Insko, and Omar gave closed formulas for the generating functions for the $q$-analog of Kostant's partition function for Lie algebras of Type $B$, $C$, and $D$ in \cite{HIO}, which we restate below for ease of reference.

\begin{theorem*}[\emph{Generating Functions \cite{HIO}}]\label{ThmGenFunct}
  The closed formulas for the generating functions $\sum_{r\geq1}\mathscr{P}_{B_r}(q)x^r$, $\sum_{r\geq1}\mathscr{P}_{C_r}(q)x^r$, and $\sum_{r\geq4}\mathscr{P}_{D_r}(q)x^r$, are given by
  \begin{align*}
      \sum_{r\geq1}\mathscr{P}_{B_r}(q)x^r &= \frac{qx+\left(-q-q^2\right)x^2+q^2x^3}{1-\left(2+2q+q^2\right)x+\left(1+2q+q^2+q^3\right)x^2}, \\
      \sum_{r\geq1}\mathscr{P}_{C_r}(q)x^r &= \frac{qx+\left(-q-q^2\right)x^2}{1-\left(2+2q+q^2\right)x+\left(1+2q+q^2+q^3\right)x^2}, \\
      \sum_{r\geq4}\mathscr{P}_{D_r}(q)x^r &= \frac{\left(q+4q^2+6q^3+3q^4+q^5\right)x^4 - \left(q+4q^2+6q^3+5q^4+3q^5+q^6\right)x^5}{1-\left(2+2q+q^2\right)x+\left(1+2q+q^2+q^3\right)x^2}.
  \end{align*}

\end{theorem*}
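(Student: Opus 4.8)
The plan is to prove each of the three rational expressions by showing that, for the relevant type, the sequence of polynomials $a_r := \mathscr{P}_{\mathfrak{g}_r}(q) = \wp_q(\hroot_{\mathfrak{g}_r})$ obeys a single second-order linear recurrence in $r$ with coefficients in $\mathbb{Z}[q]$, and then pinning down the finitely many low-rank values that fix the numerator. The starting observation is that all three stated generating functions share the denominator
\[
D(x) = 1 - (2+2q+q^2)x + (1+2q+q^2+q^3)x^2.
\]
Writing $\sum_r a_r x^r = N(x)/D(x)$ and clearing denominators, this is equivalent to the claim that
\[
a_r = (2+2q+q^2)\,a_{r-1} - (1+2q+q^2+q^3)\,a_{r-2}
\]
holds for all $r$ exceeding $\deg N$, together with the coefficient identities of $x^1, x^2, \dots$ that match $\sum_r a_r x^r \cdot D(x)$ against $N(x)$ in low degree. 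So the content reduces to (i) a type-independent recurrence and (ii) type-dependent initial data.

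First I would set up the combinatorial bookkeeping. A decomposition of $\hroot_{\mathfrak{g}_r}$ is an assignment of nonnegative multiplicities to the positive roots that sums, as a vector, to the highest root, and the $q$-weight of a decomposition is $q$ raised to the number of roots used (counted with multiplicity). Using the explicit root lists from Section~\ref{sec:background}, I would organize the decompositions according to how the last one or two nodes of the Dynkin diagram are covered, that is, which positive roots reaching $\alpha_{r-1}$ and $\alpha_r$ are used and with what multiplicity. Peeling off this local data leaves a decomposition of the highest root of the rank $r-1$ (respectively $r-2$) algebra, and the recurrence coefficients $2+2q+q^2$ and $-(1+2q+q^2+q^3)$ should record the $q$-weighted count of the admissible local configurations, the subtraction correcting for configurations double-counted when both endpoints are peeled. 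The payoff of using the \emph{highest} root is that near the boundary the available roots are severely constrained, so this peeling has only finitely many cases.

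Next I would compute the initial data by directly evaluating $\wp_q$ at the highest root for the smallest ranks: $r=1,2,3$ in types $B$ and $C$, and $r=4,5$ in type $D$ (which is why the type-$D$ sum begins at $r=4$ and has a degree-five numerator). These small cases can be enumerated by hand, or read off the rank-specific closed formulas in \cite{HIO}, and they determine the numerators $qx-(q+q^2)x^2+q^2x^3$, $qx-(q+q^2)x^2$, and the degree-five polynomial for type $D$. With the recurrence and the initial terms in hand, each generating-function identity follows from the standard equivalence between an eventually homogeneous linear recurrence and a rational generating function whose denominator is the reversal of the recurrence's characteristic polynomial.

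The main obstacle is the combinatorial derivation of the recurrence, and in particular justifying the minus sign and the exact polynomial $1+2q+q^2+q^3$. The cleanest route is probably to encode a decomposition as a weighted lattice configuration indexed along the Dynkin diagram, so that appending one node multiplies a transfer operator by a fixed $2\times 2$ matrix $M$ over $\mathbb{Z}[q]$; one then checks that the characteristic polynomial $\lambda^2 - (2+2q+q^2)\lambda + (1+2q+q^2+q^3)$ of $M$ is the reversal of $D(x)$, which simultaneously explains the common denominator across all three types and reduces the type-dependence entirely to the boundary (initial and terminal) vectors. Verifying that this transfer matrix faithfully reproduces the $q$-weighted count, and in particular correctly captures the interaction of the ``short'' type-$A$ roots with the family of ``long'' doubled roots present in types $B$, $C$, and $D$, is where the real work lies; once that is established, the three formulas fall out by selecting the appropriate boundary conditions for each type.
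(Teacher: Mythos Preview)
The paper does not prove this theorem; it is quoted from \cite{HIO} as a prerequisite result (``which we restate below for ease of reference'') and used without argument. There is therefore no in-paper proof to compare your proposal against.

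That said, your overall strategy is the right one and is in line with how \cite{HIO} obtains these formulas: establish a common second-order linear recurrence
\[
a_r \;=\; (2+2q+q^2)\,a_{r-1} \;-\; (1+2q+q^2+q^3)\,a_{r-2}
\]
for $a_r=\mathscr{P}_{\mathfrak{g}_r}(q)$ by conditioning on which positive roots cover a boundary node, and then fix the numerator from the first few ranks. One genuine subtlety in your sketch: peeling at the nodes $\alpha_{r-1},\alpha_r$ does not in general hand you back $\hroot_{\mathfrak{g}_{r-1}}$ on the nose, because in types $B$, $C$, $D$ the coefficient pattern near the tail is not uniform (e.g.\ in $C_r$ the $\alpha_r$-coefficient is $1$ while the $\alpha_{r-1}$-coefficient is $2$, so stripping one node shifts you to a weight that is not the highest root of $C_{r-1}$). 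The argument that actually closes is to condition at the $\alpha_1$ end in types $B$ and $D$ (where the coefficient is $1$, so exactly one root containing $\alpha_1$ appears) and to introduce a single auxiliary family of weights whose $q$-partition counts satisfy the same recurrence; your transfer-matrix formulation is then exactly the $2\times 2$ system encoding $(\mathscr{P}_{\mathfrak{g}_r},\text{auxiliary}_r)$. With that correction your plan goes through, and the small-rank computations you list do determine the stated numerators.
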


We now use Bender's Theorem (Theorem 1 from \cite{Bender}) to show that each of these generating functions have asymptotically Gaussian coefficients. In order to match the notation in \cite{Bender}, we define $f_\mathfrak{g}(z,w) = \sum_{r\geq r_\mathfrak{g}}\mathscr{P}_\mathfrak{g}(w)z^r$, where $r_\mathfrak{g}$ either equals 1 or 4 depending on the  classical Lie algebra $\mathfrak{g}$. Since these generating functions are rational functions in $w$ and $z$, we need only check conditions two and four of Theorem 1 in \cite{Bender} (following the observation after Example 3.1) for each Lie type.

\begin{corollary}\label{BGauss}
Let $\sum_{r\geq1}\mathscr{P}_{B_r}(q)x^r$ be the generating function for the $q$-analog of Kostant's partition function for Lie algebras of Type $B$. Then the coefficients of this generating function are asymptotically Gaussian.
\end{corollary}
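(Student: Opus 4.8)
The plan is to verify the hypotheses of Bender's Theorem (Theorem 1 of \cite{Bender}) for the bivariate generating function
\[
f_{B}(z,w)=\sum_{r\geq 1}\mathscr{P}_{B_r}(w)z^r=\frac{wz+(-w-w^2)z^2+w^2z^3}{1-(2+2w+w^2)z+(1+2w+w^2+w^3)z^2},
\]
and then invoke the theorem to conclude that the coefficient sequence $\mathscr{P}_{B_r}(w)$ is asymptotically normal. Since $f_B$ is rational in both $z$ and $w$, by the remark following Example 3.1 in \cite{Bender} it suffices to check conditions (2) and (4): namely that, writing the denominator as $D(z,w)=1-(2+2w+w^2)z+(1+2w+w^2+w^3)z^2$, for $w$ in a neighborhood of $1$ there is a unique smallest root $z=\rho(w)$ of $D(z,w)=0$ that is simple and depends analytically on $w$ (this is condition (2), the ``meromorphic with a nice dominant singularity'' hypothesis), and that a certain non-degeneracy (variance) condition holds at $w=1$, guaranteeing the limiting Gaussian is non-degenerate.

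First I would solve $D(z,w)=0$ explicitly: it is a quadratic in $z$, so
\[
z=\frac{(2+2w+w^2)\pm\sqrt{(2+2w+w^2)^2-4(1+2w+w^2+w^3)}}{2(1+2w+w^2+w^3)},
\]
and I would simplify the discriminant $\Delta(w)=(2+2w+w^2)^2-4(1+2w+w^2+w^3)=w^2(w^2-4w+4)-\ldots$; at $w=1$ one gets $D(z,1)=1-5z+5z^2$ with roots $(5\pm\sqrt5)/10$, so the smaller root is $\rho(1)=(5-\sqrt5)/10$, which is real, positive, simple (the discriminant $5\neq 0$), and strictly smaller in modulus than the other root $(5+\sqrt5)/10$. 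By continuity of the roots of a quadratic in its coefficients, these properties persist for $w$ in a real neighborhood of $1$, and $\rho(w)$ is analytic there (simple root of an analytic equation, implicit function theorem). One must also check that $\rho(w)$ is not cancelled by a zero of the numerator $N(z,w)=wz-(w+w^2)z^2+w^2z^3$ at $w=1$: evaluating $N(\rho(1),1)$ one checks it is nonzero, so $\rho$ is a genuine simple pole of $f_B(z,\cdot)$. This establishes condition (2).

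For condition (4) I would compute, following Bender's prescription, the mean and variance parameters from $\rho(w)$: set $\mu_r \sim -r\,\rho'(1)/\rho(1)$ and the variance coefficient as the appropriate combination of $\rho'(1)/\rho(1)$ and $\rho''(1)/\rho(1)$ (the standard ``rational/meromorphic schema'' formulas, e.g.\ as in Flajolet--Sedgewick or directly in \cite{Bender}), and verify the variance coefficient is strictly positive. Differentiating $D(\rho(w),w)=0$ implicitly at $w=1$ yields $\rho'(1)$ and $\rho''(1)$ in closed form in terms of $\sqrt5$; plugging in, the leading variance coefficient should come out to $1/4$ (consistent with the $\sigma_r^2\sim r/4$ claimed in Theorem~\ref{thm:type specific}), which is positive, so condition (4) holds. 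Bender's Theorem then gives that $\mathscr{P}_{B_r}(w)$, suitably normalized, is asymptotically Gaussian, proving the corollary. The main obstacle I anticipate is not conceptual but bookkeeping: carefully confirming the strict dominance $|\rho(w)|<|\text{other root}|$ and the non-vanishing of the numerator at $\rho(w)$ throughout a full complex (not merely real) neighborhood of $w=1$, as Bender's hypotheses require analyticity in $w$, and then chasing the $\sqrt5$'s through the implicit differentiation to confirm positivity of the variance term.
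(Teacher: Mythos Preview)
Your approach is essentially the paper's: invoke Bender's Theorem~1, use the rational-function remark after Example~3.1 to reduce to conditions (2) and (4), and verify them via the roots of $P(z,1)=1-5z+5z^2$ together with non-vanishing of the numerator at the smaller root $(5-\sqrt5)/10$. Two small remarks: the paper labels the numerator non-vanishing check as condition~(4) (not as part of (2)), and your predicted asymptotic variance slope $1/4$ is the type-$A$ value---for type~$B$ (and $C$, $D$, which share the same denominator $P(z,w)$) one gets $13/25$, though this does not affect the argument since only positivity is required.
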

\begin{proof}
Let $f_{B_r}(z,w) = \frac{g_{B_r}(z,w)}{P(z,w)} = \sum_{r\geq 1}\mathscr{P}_{B_r}(w)z^r$. Then the roots of $P(z,1) = 1-5z+5z^2$ are $\frac{1}{10}\left(5-\sqrt{5}\right)$ and $\frac{1}{10}\left(5+\sqrt{5}\right)$. Hence, condition two of Bender's Theorem 1 is satisfied. And $g_{B_r}\left(\frac{1}{10}\left(5-\sqrt{5}\right),1\right)\neq 0$. Hence, condition four of Bender's Theorem 1 is satisfied. 
\end{proof}

Notice that the denominators for the generating functions for Types $C$ and $D$ are the same as for Type $B$. Hence, condition two of Bender's Theorem 1 will be satisfied for all three Lie algebra types. So, for Types C and D, we need only check condition four of Bender's Theorem. 

\begin{corollary}\label{CGauss}
Let $\sum_{r\geq1}\mathscr{P}_{C_r}(q)x^r$ be the generating function for the $q$-analog of Kostant's partition function for Lie algebras of Type C. Then the coefficients of this generating function are asymptotically Gaussian.
\end{corollary}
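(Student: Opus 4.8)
The plan is to follow exactly the template established in the proof of Corollary~\ref{BGauss}, since the generating function for Type $C$ shares its denominator with Type $B$. First I would write $f_{C_r}(z,w) = \frac{g_{C_r}(z,w)}{P(z,w)} = \sum_{r\geq 1}\mathscr{P}_{C_r}(w)z^r$, where $P(z,w) = 1-\left(2+2w+w^2\right)z+\left(1+2w+w^2+w^3\right)z^2$ is the common denominator and $g_{C_r}(z,w) = qz + (-q-q^2)z^2$ is the numerator from the Harris--Insko--Omar formula. Since this is a rational function in $z$ and $w$, by the observation following Example~3.1 in \cite{Bender} it suffices to verify conditions two and four of Bender's Theorem~1; moreover, as the denominator is identical to the Type $B$ case, condition two is already established in the proof of Corollary~\ref{BGauss}, so only condition four remains.

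The single remaining step is to check condition four, namely that the numerator does not vanish at the relevant root of $P(z,1)$. Setting $w=1$ gives $P(z,1)=1-5z+5z^2$, whose smallest root is $z_0 = \frac{1}{10}\left(5-\sqrt{5}\right)$; I would then evaluate $g_{C_r}(z_0,1) = z_0 + (-2)z_0^2 = z_0(1-2z_0)$. Since $z_0 = \frac{1}{10}(5-\sqrt5)$ is positive and $1-2z_0 = 1 - \frac{1}{5}(5-\sqrt5) = \frac{\sqrt5}{5} \neq 0$, we get $g_{C_r}(z_0,1)\neq 0$, so condition four of Bender's Theorem~1 holds. This completes the verification, and Bender's Theorem then yields that the coefficients $\mathscr{P}_{C_r}(q)$ are asymptotically Gaussian.

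There is essentially no obstacle here: the argument is a direct transcription of the Type $B$ proof with a one-line numerator evaluation replacing the one for Type $B$. If anything, the only point requiring a moment of care is confirming that the hypotheses of Bender's Theorem~1 (in particular the analyticity and the simple-root condition packaged into conditions two and four for rational functions) are genuinely inherited from the Type $B$ analysis rather than needing independent checking — but since the denominator is literally the same polynomial, this inheritance is immediate.
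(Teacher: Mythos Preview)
Your proposal is correct and follows essentially the same approach as the paper: observe that the denominator $P(z,w)$ is shared with Type~$B$ so condition two of Bender's Theorem~1 is inherited, and then verify condition four by checking that the numerator does not vanish at $z_0=\frac{1}{10}(5-\sqrt5)$, $w=1$. The only cosmetic slip is that you write the numerator as $qz+(-q-q^2)z^2$ while working in the variables $(z,w)$; it should read $wz+(-w-w^2)z^2$, but this does not affect the argument.
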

\begin{proof}
Let $f_{C_r}(z,w) = \frac{g_{C_r}(z,w)}{P(z,w)} = \sum_{r\geq 1}\mathscr{P}_{C_r}(w)z^r$. Since, $g_{C_r}\left(\frac{1}{10}\left(5-\sqrt{5}\right),1\right)\neq 0$, condition four of Bender's Theorem 1 is satisfied. 
\end{proof}

\begin{corollary}\label{DGauss}
Let $\sum_{r\geq4}\mathscr{P}_{D_r}(q)x^r$ be the generating function for the $q$-analog of Kostant's partition function for Lie algebras of Type D. Then the coefficients of this generating function are asymptotically Gaussian.
\end{corollary}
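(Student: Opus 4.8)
The plan is to mirror the proofs of Corollaries~\ref{BGauss} and \ref{CGauss} verbatim, since the only hypotheses of Bender's Theorem 1 left to verify for Type $D$ are conditions two and four, and condition two has already been checked once and for all (the denominator $P(z,w)$ is the same rational function in all three cases, so $P(z,1)=1-5z+5z^2$ has the two distinct real roots $\frac{1}{10}(5-\sqrt{5})$ and $\frac{1}{10}(5+\sqrt{5})$, with the smaller one $\frac{1}{10}(5-\sqrt5)$ the unique dominant singularity). Thus the entire content of the proof is to write $f_{D_r}(z,w)=\frac{g_{D_r}(z,w)}{P(z,w)}=\sum_{r\geq 4}\mathscr{P}_{D_r}(w)z^r$ with
\[
g_{D_r}(z,w)=\left(w+4w^2+6w^3+3w^4+w^5\right)z^4-\left(w+4w^2+6w^3+5w^4+3w^5+w^6\right)z^5,
\]
and then to evaluate the numerator at the dominant root of the denominator with $w=1$ to confirm it is nonzero. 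That is, I would compute $g_{D_r}\!\left(\frac{1}{10}(5-\sqrt5),1\right)$: at $w=1$ the numerator is $16z^4-21z^5=z^4(16-21z)$, and substituting $z=\frac{1}{10}(5-\sqrt5)$ gives $z^4$ times $16-\frac{21}{10}(5-\sqrt5)=16-\frac{21}{2}+\frac{21\sqrt5}{10}=\frac{11}{2}+\frac{21\sqrt5}{10}\neq 0$, so $g_{D_r}\!\left(\frac{1}{10}(5-\sqrt5),1\right)\neq 0$ and condition four of Bender's Theorem 1 is satisfied.

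I would also note, as in the remark preceding Corollary~\ref{CGauss}, the one point where Type $D$ differs superficially from Types $B$ and $C$: its generating series starts at $r=4$ rather than $r=1$, so $f_{D_r}(z,w)$ is the truncation $\sum_{r\geq 4}\mathscr{P}_{D_r}(w)z^r$ rather than a series with a nonzero constant or linear term. This is harmless for Bender's Theorem — multiplying by $z^4$ does not move singularities or create zeros of the numerator at the dominant root — but it is worth a sentence so that the reader sees why the already-verified condition two still applies and why the numerator $g_{D_r}$ is the polynomial displayed above (with its extra factor of $z^4$) rather than something with lower-order terms. No additional analytic input is needed beyond what Bender provides.

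The only genuine subtlety — and the step I would be most careful about — is bookkeeping: making sure the numerator polynomial is transcribed correctly from the Harris–Insko–Omar generating function so that the evaluation at $z=\frac{1}{10}(5-\sqrt5)$, $w=1$ is the right number, and confirming that this root is indeed the one of smaller modulus (so that it is the relevant singularity for the asymptotics), namely $\frac{1}{10}(5-\sqrt5)\approx 0.276 < 0.724 \approx \frac{1}{10}(5+\sqrt5)$. There is no real obstacle here; once condition four is checked the conclusion is immediate from Bender's Theorem 1, exactly as in the previous two corollaries. Hence the proof reads: ``Let $f_{D_r}(z,w)=\frac{g_{D_r}(z,w)}{P(z,w)}=\sum_{r\geq 4}\mathscr{P}_{D_r}(w)z^r$. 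Since $g_{D_r}\!\left(\frac{1}{10}(5-\sqrt5),1\right)\neq 0$, condition four of Bender's Theorem 1 is satisfied, and the conclusion follows.''
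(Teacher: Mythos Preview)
Your approach is exactly the paper's: write $f_{D_r}(z,w)=g_{D_r}(z,w)/P(z,w)$, note the denominator is the same as in Types $B$ and $C$ so condition two carries over, and verify condition four by checking $g_{D_r}\!\left(\tfrac{1}{10}(5-\sqrt5),1\right)\neq 0$. The only slip is arithmetic: at $w=1$ the numerator is $15z^4-20z^5=5z^4(3-4z)$, not $16z^4-21z^5$ (the coefficient sums are $1+4+6+3+1=15$ and $1+4+6+5+3+1=20$); the corrected value $5z^4(3-4z)\big|_{z=\frac{1}{10}(5-\sqrt5)}=5z^4(5+2\sqrt5)/5$ is still nonzero, so the conclusion stands.
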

\begin{proof}
Let $f_{D_r}(z,w) = \frac{g_{D_r}(z,w)}{P(z,w)} = \sum_{r\geq 1}\mathscr{P}_{D_r}(w)z^r$. Since, $g_{D_r}\left(\frac{1}{10}\left(5-\sqrt{5}\right),1\right)\neq 0$, condition four of Bender's Theorem 1 is satisfied. 
\end{proof}

We now compute the means and variances for these generating functions. Harris, Insko, and Omar also gave closed formulas for the value of the $q$-analog of Kostant's partition function 
on the highest root of a classical Lie algebra  in \cite{HIO}, which we restate below for ease of reference. 
\begin{corollary}[\emph{Explicit formulas \cite{HIO}}]\label{corclosed}
Let $\hroot$ denote the highest root of a Lie algebra. Let $\P_{A_r}\left(q\right)$, $\P_{B_r}\left(q\right)$, $\P_{C_r}\left(q\right)$, and $\P_{D_r}\left(q\right)$ denote $\wp_q\left(\hroot\right)$, 
in the Lie algebras of type $A_r$, $B_r$, $C_r$, and $D_r$, respectively. Then explicit formulas for the value of the $q$-analog of Kostant's partition function on the highest root of the classical Lie algebras are as follow:
\begin{align}
\text{Type $A_r$}\;\left(r\geq 1\right):&\hspace{3mm} \P_{A_r}\left(q\right)\ = \  q\left(1+q\right)^{r-1},\\
\text{Type $B_r$}\; \left(r\geq 2\right):&\hspace{3mm}\P_{B_r}\left(q\right)\ = \ b_{+}\left(q\right) \cdot \left(\beta_+\left(q\right)\right)^{r-2} + b_{-}\left(q\right) \cdot \left(\beta_{-}\left(q\right)\right)^{r-2},\label{eq:B}\\
\text{Type $C_r$}\; \left(r\geq 1\right):&\hspace{3mm}\P_{C_r}\left(q\right)\ = \ c_{+}\left(q\right) \cdot \left(\beta_+\left(q\right)\right)^{r-1} + c_{-}\left(q\right) \cdot \left(\beta_{-}\left(q\right)\right)^{r-1},\label{eq:C}\\
\text{Type $D_r$}\; \left(r\geq 4\right):&\hspace{3mm}\P_{D_r}\left(q\right)\ = \ d_{+}\left(q\right) \cdot \left(\beta_{+}\left(q\right)\right)^{r-4} + d_{-}\left(q\right) \cdot \left(\beta_{-}\left(q\right)\right)^{r-4},\label{eq:D}
\end{align}
where 
\[
\beta_{\pm}\left(q\right) \ = \  \dfrac{\left(q^2+2q+2\right)\pm q\sqrt{q^2+4}}{2}
\]
and 
\begin{align*}
b_{\pm}\left(q\right) & =  \dfrac{\left(q^5+q^4+5q^3+4q^2+4q\right)\pm\left(q^4+q^3+3q^2+2q\right)\sqrt{q^2+4}}{2\left(q^2+4\right)}, \\
c_{\pm}\left(q\right) & =   \dfrac{\left(q^3+4q\right) \pm q^2\sqrt{q^2+4}}{2\left(q^2+4\right)} ,\\ 
d_{\pm}\left(q\right) & =  
\dfrac{q^7+3q^6+10q^5+16q^4+25q^3+16q^2+4q\pm\left(q^6+3q^5+8q^4+12q^3+9q^2+2q\right)\sqrt{q^2+4}}{2\left(q^2+4\right)}.
\end{align*}
\end{corollary}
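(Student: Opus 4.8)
The plan is to read the four closed formulas off the Harris–Insko–Omar generating functions, treating Type $A$ separately and handling Types $B$, $C$, $D$ uniformly. Type $A$ is immediate: $\P_{A_r}(q)=q(1+q)^{r-1}$ is just the binomial expansion and is in fact already recorded in Equation~\eqref{eq1}. The crucial structural observation for the other types is that the three generating functions $\sum_r \mathscr{P}_{\mathfrak{g}}(q)x^r$ share the common denominator
\[P(x,q)=1-(2+2q+q^2)x+(1+2q+q^2+q^3)x^2,\]
so all three coefficient sequences are governed by the same quadratic. First I would factor $P(x,q)=(1-\beta_+ x)(1-\beta_- x)$ by exhibiting $\beta_\pm(q)$ as the numbers with sum $\beta_++\beta_-=2+2q+q^2$ and product $\beta_+\beta_-=1+2q+q^2+q^3$; computing the discriminant $(2+2q+q^2)^2-4(1+2q+q^2+q^3)=q^2(q^2+4)$ shows these are exactly $\beta_\pm(q)=\tfrac12\bigl((q^2+2q+2)\pm q\sqrt{q^2+4}\bigr)$, matching the statement.

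Next, for each $\mathfrak{g}\in\{B_r,C_r,D_r\}$ I would perform a partial-fraction decomposition
\[\frac{N_{\mathfrak{g}}(x,q)}{(1-\beta_+ x)(1-\beta_- x)}=Q_{\mathfrak{g}}(x,q)+\frac{U_{\mathfrak{g}}(q)}{1-\beta_+ x}+\frac{V_{\mathfrak{g}}(q)}{1-\beta_- x},\]
where $Q_{\mathfrak{g}}$ is the polynomial part arising because $\deg_x N_{\mathfrak{g}}\ge 2$ (degree $3$, $2$, $5$ for $B$, $C$, $D$). Extracting the coefficient of $x^r$ then yields $\P_{\mathfrak{g},r}(q)=U_{\mathfrak{g}}(q)\beta_+^{r}+V_{\mathfrak{g}}(q)\beta_-^{r}$ for every $r$ large enough that $Q_{\mathfrak{g}}$ no longer contributes; a degree count shows this threshold is exactly $r\ge 2$, $r\ge 1$, $r\ge 4$, which is precisely the range claimed. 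The coefficients are given by the residue formula $U_{\mathfrak{g}}=N_{\mathfrak{g}}(1/\beta_+,q)/(1-\beta_-/\beta_+)$, and symmetrically for $V_{\mathfrak{g}}$. Finally I would absorb the appropriate power of $\beta_\pm$ to rewrite $U_{\mathfrak{g}}\beta_+^r=b_+\beta_+^{r-2}$ and so on, thereby reconciling the exponent shifts $r-2$, $r-1$, $r-4$, and simplify $U_{\mathfrak{g}},V_{\mathfrak{g}}$ into the stated $b_\pm$, $c_\pm$, $d_\pm$.

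An alternative to the residue computation, which avoids evaluating $N_{\mathfrak{g}}(1/\beta_\pm,q)$ directly, is to note that both the true sequence $\P_{\mathfrak{g},r}$ and any expression of the form $A\beta_+^r+B\beta_-^r$ satisfy the second-order recurrence $\P_r=(2+2q+q^2)\P_{r-1}-(1+2q+q^2+q^3)\P_{r-2}$. Hence it suffices to check that the claimed closed form agrees with the genuine coefficients at the two smallest admissible values of $r$, which one reads off by expanding $N_{\mathfrak{g}}/P$ as a power series to two orders beyond the threshold. This reduces the verification to two polynomial identities per type.

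The main obstacle will be the algebraic simplification in the last step. Both $U_{\mathfrak{g}}$ and $V_{\mathfrak{g}}$ carry a factor of $1/(\beta_+-\beta_-)=1/(q\sqrt{q^2+4})$, so obtaining the clean forms $b_\pm$, $c_\pm$, $d_\pm$ requires rationalizing and clearing $\sqrt{q^2+4}$ from the denominators and recognizing that the surds recombine into exactly the $\pm(\cdots)\sqrt{q^2+4}$ numerators displayed in the statement. This bookkeeping is heaviest for Type $D$, whose numerator has degree $5$ and whose $d_\pm$ involve degree-$7$ polynomials; there the recurrence-matching alternative is the safer route, since it replaces the surd manipulation with two direct coefficient comparisons.
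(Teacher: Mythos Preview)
Your approach is correct and is the standard derivation of closed-form coefficients from a rational generating function with quadratic denominator: factor the denominator, do partial fractions (or equivalently match two initial values against the general solution of the associated linear recurrence), and simplify. The discriminant computation, the identification of $\beta_\pm$, and the degree count giving the thresholds $r\ge 2,1,4$ are all right.

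However, there is nothing to compare against: the paper does \emph{not} prove Corollary~\ref{corclosed}. It is quoted verbatim from \cite{HIO} (``which we restate below for ease of reference''), just as the generating functions themselves are quoted from the same source. So your write-up supplies a proof where the present paper simply cites one. If you want to include this derivation, the recurrence-plus-initial-conditions route you describe is indeed the cleanest, especially for Type~$D$, since it sidesteps the surd bookkeeping entirely; the residue formula works too but, as you note, the rationalization for $d_\pm$ is tedious.
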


In type $A_r$ we know
\begin{align}
\wp_q\left(\hroot\right)&=q\left(1+q\right)^{r-1}=q\displaystyle\sum_{i=0}^{r-1}\binom{r-1}{i}q^{i}=\displaystyle\sum_{k=1}^{r}\binom{r-1}{k-1}q^k.\label{eq1}
\end{align}

For $1\leq k\leq r$ let $p_{r,k}$ denote the number of ways to write $\hr$ as a sum of exactly $k$ positive roots. Hence $p_{r,k}=[\P_\mathfrak{g}\left(q\right)]_{q^k}$ and from Equation \eqref{eq1} we know that $p_{r,k}=\binom{r-1}{k-1}$ ways to write $\hr$ as a sum of exactly $k$ positive roots. We note that $p_{r,0}=0$ for all $r$, and $p_{0,k}=0$ for all $k$.

\begin{proposition}
Let $F\left(x,y\right)=\displaystyle\sum_{r\geq 0}\sum_{k\geq 0}p_{r,k}x^ry^k$ be the generating function for the $p_{r,k}'s$ arising from the number of ways to write the highest root $\hr$ as a sum of the positive roots of $\mathfrak{sl}_{r+1}\left(\mathbb{C}\right)$. Then 
\begin{align}
F\left(x,y\right)&=\frac{xy}{1-x-xy}.\label{A:Fxy}
\end{align}
\end{proposition}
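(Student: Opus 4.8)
The plan is to evaluate the double sum directly from the closed form $p_{r,k}=\binom{r-1}{k-1}$ recorded above. Since $p_{r,0}=0$ for all $r$ and $p_{0,k}=0$ for all $k$, the only nonzero terms occur for $1\leq k\leq r$, so
\[
F(x,y)=\sum_{r\geq 1}\sum_{k=1}^{r}\binom{r-1}{k-1}x^r y^k.
\]
First I would reindex with $s=r-1$ and $j=k-1$, pulling out a common factor of $xy$, to obtain
\[
F(x,y)=xy\sum_{s\geq 0}\sum_{j=0}^{s}\binom{s}{j}x^s y^j=xy\sum_{s\geq 0}x^s(1+y)^s,
\]
where the inner sum is evaluated by the binomial theorem. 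Summing the resulting geometric series in $x(1+y)$ then yields $F(x,y)=\dfrac{xy}{1-x(1+y)}=\dfrac{xy}{1-x-xy}$, which is the claimed identity (read either as an identity of formal power series in $x,y$, or as an analytic identity valid for $|x(1+y)|<1$).

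An alternative route, which sidesteps the explicit binomial formula, is to derive a functional equation for $F$ from Pascal's recurrence. One checks that $p_{1,1}=1$ and that $p_{r,k}=p_{r-1,k-1}+p_{r-1,k}$ for $r\geq 2$; combinatorially this reflects the dichotomy that in a decomposition of $\hr$ the summand ending in $\alpha_r$ is either $\alpha_r$ itself (delete it to get a $(k-1)$-part decomposition in $\mathfrak{sl}_r$, contributing $p_{r-1,k-1}$) or a longer positive root $\alpha_i+\cdots+\alpha_r$ (delete the trailing $\alpha_r$ to get a $k$-part decomposition in $\mathfrak{sl}_r$, contributing $p_{r-1,k}$). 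Multiplying by $x^r y^k$, summing over all $r,k$, and using the boundary vanishing $p_{0,k}=p_{r,0}=0$ to reconcile the index ranges gives
\[
F(x,y)=xy+xyF(x,y)+xF(x,y),
\]
and solving for $F(x,y)$ reproduces the stated formula.

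I do not expect a genuine obstacle here. The one point that needs a little care is the bookkeeping with summation ranges after the index shifts: it is precisely the vanishing of $p_{r,k}$ along the boundary of the $(r,k)$-lattice that makes each shifted sum equal to $F$ again, rather than to $F$ plus correction terms. Everything else is the binomial theorem and a geometric series, so the argument is short.
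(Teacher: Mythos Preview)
Your proposal is correct, and your primary argument is essentially the paper's own proof: the paper simply invokes the bivariate generating function $\sum_{n,k\ge 0}\binom{n}{k}x^ny^k=\dfrac{1}{1-x-xy}$, while you spell out its derivation via the binomial theorem and a geometric series and then shift indices. Your second, recurrence-based derivation is a pleasant bonus that the paper does not give.
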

\begin{proof}
The result follows from the bivariate generating function of the binomial coefficients
\[\displaystyle\sum_{n\geq 0}\sum_{k\geq 0}\binom{n}{k}x^ny^k=\frac{1}{1-y-yx}.\qedhere\]
\end{proof}

\begin{proposition}\label{prop:2}For $r\geq 1$, the mean and variance of $Y_{A_r}$ are given by 
$ \mu_r=\frac{r+1}{2}$ and $\sigma_r^2=\frac{r-1}{4}$, respectively. \end{proposition}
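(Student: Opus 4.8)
The plan is to compute the mean and variance directly from the closed form $\wp_q(\hr) = q(1+q)^{r-1}$ using Proposition~\ref{prop:1}, exactly in the spirit of how Proposition~\ref{prop:general result} was obtained as a special case of Proposition~\ref{prop:gen Gaussian}'s setup. Set $g_r(y) = y(1+y)^{r-1}$, so that $g_r(1) = 2^{r-1}$ is the total number of decompositions. The mean is $\mu_r = g_r'(1)/g_r(1)$ and the variance is $\sigma_r^2 = \left(\tfrac{d}{dy}(y g_r'(y))\big|_{y=1}\right)/g_r(1) - \mu_r^2$.

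For the computation, I would differentiate $g_r(y) = y(1+y)^{r-1}$: we get $g_r'(y) = (1+y)^{r-1} + (r-1)y(1+y)^{r-2}$, so $g_r'(1) = 2^{r-1} + (r-1)2^{r-2} = 2^{r-2}(r+1)$, giving $\mu_r = 2^{r-2}(r+1)/2^{r-1} = (r+1)/2$. For the variance, compute $y g_r'(y) = y(1+y)^{r-1} + (r-1)y^2(1+y)^{r-2}$, differentiate once more, evaluate at $y=1$, divide by $2^{r-1}$, and subtract $\mu_r^2 = (r+1)^2/4$; this is a routine calculation yielding $\sigma_r^2 = (r-1)/4$. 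Alternatively — and perhaps more cleanly for exposition — I would invoke the fact that $p_{r,k} = \binom{r-1}{k-1}$ (established just above via Equation~\eqref{eq1}), so $Y_{A_r}$ is distributed as $1 + \mathrm{Bin}(r-1, \tfrac12)$; hence $\mu_r = 1 + (r-1)/2 = (r+1)/2$ and $\sigma_r^2 = (r-1)/4$ by the standard moments of a binomial, with the shift by $1$ leaving the variance unchanged.

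There is essentially no obstacle here: the statement is a direct corollary of either Proposition~\ref{prop:1} applied to the explicit generating function, or of the binomial identification. The only thing to be mildly careful about is the normalization in Proposition~\ref{prop:1} — making sure the second moment is computed as $\tfrac{d}{dy}(y g_r'(y))|_{y=1}/g_r(1)$ rather than $g_r''(1)/g_r(1)$ — and confirming the boundary case $r=1$, where $\wp_q(\hr) = q$, $\mu_1 = 1$, $\sigma_1^2 = 0$, which matches both formulas. I would therefore write the proof as a two-line appeal to Proposition~\ref{prop:1} (or to the binomial moments), with the differentiation shown inline.

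\begin{proof}
By Equation~\eqref{eq1}, $g_r(y) = \sum_{k=1}^r p_{r,k} y^k = y(1+y)^{r-1}$, equivalently $Y_{A_r}$ has the distribution of $1 + Z$ where $Z \sim \mathrm{Bin}(r-1,\tfrac12)$. Hence $\mu_r = 1 + \tfrac{r-1}{2} = \tfrac{r+1}{2}$ and $\sigma_r^2 = \tfrac{r-1}{4}$. One may also verify this directly from Proposition~\ref{prop:1}: since $g_r(1) = 2^{r-1}$ and $g_r'(y) = (1+y)^{r-1} + (r-1)y(1+y)^{r-2}$, we get $g_r'(1) = 2^{r-2}(r+1)$, so $\mu_r = (r+1)/2$; differentiating $y g_r'(y)$ and evaluating at $y=1$ gives second moment $2^{r-3}(r^2+3r)$, whence $\sigma_r^2 = \tfrac{r^2+3r}{4} - \tfrac{(r+1)^2}{4} = \tfrac{r-1}{4}$.
\end{proof}
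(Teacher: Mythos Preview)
Your proof is correct and follows essentially the same approach as the paper: identify $g_r(y)=y(1+y)^{r-1}$ and apply Proposition~\ref{prop:1}, with the paper leaving the variance as ``a similar calculation'' while you carry it out explicitly. Your additional binomial identification $Y_{A_r}\overset{d}{=}1+\mathrm{Bin}(r-1,\tfrac12)$ is a clean alternative route; one tiny wording nit is that $2^{r-3}(r^2+3r)$ is the value of $\tfrac{d}{dy}(yg_r'(y))\big|_{y=1}$, not the second moment itself---the second moment is that quantity divided by $g_r(1)=2^{r-1}$, i.e.\ $(r^2+3r)/4$, which is exactly what you then use.
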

\begin{proof}
By Equation \eqref{A:Fxy} and use of  the geometric sum formula, we note
\[F\left(x,y\right)=xy\left(\frac{1}{1-\left(1+y\right)x}\right)=\displaystyle\sum_{m=0}^{\infty}y\left(1+y\right)^mx^{m+1}.\]
Hence $g_r\left(y\right)=y\left(1+y\right)^{r-1}.$ Now observe that by Proposition \ref{prop:1}
\begin{align*}
\mu_r=\frac{g_r'\left(1\right)}{g_r\left(1\right)}=\frac{y\left(r-1\right)\left(1+y\right)^{r-2}+\left(1+y\right)^{r-1}|_{y=1}}{2^{r-1}}=\frac{r+1}{2}.
\end{align*}
The variance follows from a similar calculation.
\end{proof}


\begin{proposition}\label{prop:B} 
For $r\geq 2$, the mean and variance of $Y_{B_r}$ are given by
 \begin{align*}
 \mu_{r}  \ &= \  \displaystyle\frac{\left(5-\sqrt{5}+\left(25-13\sqrt{5}\right)r\right)\left(5-\sqrt{5}\right)^r+\left(5+\sqrt{5}+\left(25+13\sqrt{5}\right)r\right)\left(5+\sqrt{5}\right)^r}{5[\left(5-3\sqrt{5}\right)\left(5-\sqrt{5}\right)^r+\left(5+3\sqrt{5}\right)\left(5+\sqrt{5}\right)^r]} \\ \mbox{and}& \nonumber\\
 \sigma_{r}^2  \ &= \  \displaystyle\frac{20^{r+1}r^2+[26\left(3\sqrt{5}-7\right)\left(5-\sqrt{5}\right)^{2r}-26\left(3\sqrt{5}+7\right)\left(5+\sqrt{5}\right)^{2r}+36\cdot\frac{20^{r+1}}{5}]r}{-5[\left(5-\sqrt{3}\right)\left(5-\sqrt{5}\right)^r+\left(5+3\sqrt{5}\right)\left(5+\sqrt{5}\right)^r]^2}\\\nonumber
 \ &\ \ + \ \displaystyle\frac{[2\left(73-25\sqrt{5}\right)\left(5-\sqrt{5}\right)^{2r}+2\left(73+25\sqrt{5}\right)\left(5+\sqrt{5}\right)^{2r}-63\cdot\frac{20^{r+1}}{5}]}{-5[\left(5-\sqrt{3}\right)\left(5-\sqrt{5}\right)^r+\left(5+3\sqrt{5}\right)\left(5+\sqrt{5}\right)^r]^2}.
\end{align*} 
\end{proposition}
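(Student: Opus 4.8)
The plan is to compute $\mu_r$ and $\sigma_r^2$ directly from Proposition~\ref{prop:1} applied to the generating function $g_r(y) = \mathscr{P}_{B_r}(y)$, using the explicit closed formula from Corollary~\ref{corclosed}. Write $g_r(y) = b_+(y)\beta_+(y)^{r-2} + b_-(y)\beta_-(y)^{r-2}$, where $\beta_\pm(q) = \tfrac{1}{2}\big((q^2+2q+2) \pm q\sqrt{q^2+4}\big)$. The first task is to evaluate at $y=1$: here $\sqrt{q^2+4}\big|_{q=1} = \sqrt{5}$, so $\beta_\pm(1) = \tfrac{5\pm\sqrt5}{2}$ and $g_r(1) = b_+(1)\beta_+(1)^{r-2} + b_-(1)\beta_-(1)^{r-2}$; one checks $b_\pm(1) = \tfrac{(15\pm 7\sqrt5)}{10}$ and simplifies, which should reproduce the stated denominator $\tfrac15\big[(5-3\sqrt5)(5-\sqrt5)^r + (5+3\sqrt5)(5+\sqrt5)^r\big]$ after pulling out the factor $2^{-(r-2)}$ and absorbing it.

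Next I would compute $g_r'(1)$. Differentiating $g_r(y) = \sum_{\pm} b_\pm(y)\beta_\pm(y)^{r-2}$ gives
\[
g_r'(y) = \sum_{\pm}\Big[b_\pm'(y)\beta_\pm(y)^{r-2} + (r-2)\,b_\pm(y)\,\beta_\pm'(y)\,\beta_\pm(y)^{r-3}\Big],
\]
so evaluating at $y=1$ requires only the four numbers $b_\pm(1)$, $b_\pm'(1)$, $\beta_\pm(1)$, $\beta_\pm'(1)$, each a closed-form expression in $\sqrt5$ obtained by routine differentiation of the formulas in Corollary~\ref{corclosed}. Then $\mu_r = g_r'(1)/g_r(1)$; the appearance of the linear-in-$r$ terms $(25\mp13\sqrt5)r$ in the numerator comes precisely from the $(r-2)\beta_\pm'(1)$ factor, and matching constants is a matter of bookkeeping. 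For the variance, Proposition~\ref{prop:1} gives $\sigma_r^2 = \big[\tfrac{d}{dy}(y g_r'(y))\big]_{y=1}/g_r(1) - \mu_r^2$, and $\tfrac{d}{dy}(y g_r'(y))|_{y=1} = g_r'(1) + g_r''(1)$, so I additionally need $g_r''(1)$. Differentiating twice produces a term with $(r-2)(r-3)$ (giving the $r^2$ contribution, hence the $20^{r+1}r^2$ in the stated numerator once the $\beta_+(1)\beta_-(1) = \tfrac{(5+\sqrt5)(5-\sqrt5)}{4} = 5$ cross term is expanded), a term with $(r-2)$, and a constant term. Collecting everything over the common denominator $g_r(1)^2$ and combining like powers of $(5\pm\sqrt5)^r$ should yield the displayed expression; note that $(5-\sqrt5)^r(5+\sqrt5)^r = 20^r$ accounts for the $20^{r+1}$ and $20^r$ terms.

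The main obstacle is purely computational: there is no conceptual difficulty, but the algebra of combining $b_\pm'(1)$, $b_\pm''(1)$, $\beta_\pm''(1)$ over the squared denominator while correctly tracking the $\sqrt5$-conjugate structure is error-prone, and the stated formula even appears to contain typos (e.g.\ $(5-\sqrt3)$ and $(5-\sqrt5)$ in the variance denominator should presumably both be $(5-3\sqrt5)$, matching the $\mu_r$ denominator). To keep the exposition honest I would (i) record the auxiliary evaluations $\beta_\pm(1)=\tfrac{5\pm\sqrt5}{2}$, $\beta_\pm(1)\beta_\mp(1)=5$, $\beta_\pm'(1)$, $b_\pm(1)$, $b_\pm'(1)$, $b_\pm''(1)$ in a short table, (ii) assemble $g_r(1)$, $g_r'(1)$, $g_r'(1)+g_r''(1)$ as $\mathbb{Z}[\sqrt5]$-linear combinations of $(5-\sqrt5)^r$ and $(5+\sqrt5)^r$ (and, after squaring, of $(5\pm\sqrt5)^{2r}$ and $20^r$), and (iii) cite a computer algebra verification for the final simplification. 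Alternatively, one can sidestep the closed form entirely by reading $g_r(1)$, $g_r'(1)$, $g_r''(1)$ off the partial-fraction decomposition of the rational generating function $\sum_{r\ge1}\mathscr{P}_{B_r}(w)x^r$ from the theorem of \cite{HIO}, differentiating in $w$ before extracting the coefficient of $x^r$; this is the approach deferred to Appendix~\ref{app: alt proof}, and it is the cleaner route if one wants to avoid the square roots.
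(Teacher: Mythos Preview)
Your proposal is correct and matches the paper's approach exactly: the paper's proof of this proposition is the single sentence ``Applying the result in Proposition~\ref{prop:1} to Equation~\eqref{eq:B} yields the desired result, albeit after some straightforward, but lengthy calculations,'' and your outline is precisely a fleshed-out version of that computation. Your observation about the apparent typos in the variance denominator (the stray $\sqrt{3}$'s) is also well taken; the only minor inaccuracy is that Appendix~\ref{app: alt proof} gives an alternate proof of the Gaussian convergence (Theorem~\ref{thm:type specific}), not an alternate derivation of these mean and variance formulas.
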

\begin{proof}
Applying the result in Proposition \ref{prop:1} to Equation \eqref{eq:B} yields the desired result, albeit after some straightforward, but lengthy calculations.
\end{proof}


\begin{proposition}\label{prop:C} For $r\geq 3$, the mean and variance of $Y_{C_r}$ are given by
\begin{align*}
 \mu_{r}  \ &= \  \displaystyle \frac{\left(\left(1-\sqrt{5}\right)+\left(7-\sqrt{5}\right)r\right)\left(5-\sqrt{5}\right)^r+\left(\left(1+\sqrt{5}\right)+\left(7+\sqrt{5}\right)r\right)\left(5+\sqrt{5}\right)^r}{10\left(\left(5-\sqrt{5}\right)^r+\left(5+\sqrt{5}\right)^r\right)} \\ \mbox{and}& \nonumber\\
 \sigma_{r}^2  \ &= \  \displaystyle\frac{\frac{20^{r+1}}{4}r^2 + \left[13\left(\left(5-\sqrt{5}\right)^{2r}+\left(5+\sqrt{5}\right)^{2r}\right)+9\cdot\frac{20^{r+1}}{5}\right]r}{25\left(\left(5-\sqrt{5}\right)^r+\left(5+\sqrt{5}\right)^r\right)^2}\\\nonumber
 \ &\ \ + \ \displaystyle\frac{\left(-21+4\sqrt{5}\right)\left(5+\sqrt{5}\right)^{2r}-\left(21+4\sqrt{5}\right)\left(5-\sqrt{5}\right)^{2r}-37\cdot20^r}{25\left(\left(5-\sqrt{5}\right)^r+\left(5+\sqrt{5}\right)^r\right)^2}
\end{align*} 
\end{proposition}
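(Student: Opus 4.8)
The plan is to apply Proposition~\ref{prop:1} with $g_r(y) = \P_{C_r}(y) = \wp_q(\hroot)\big|_{q=y}$, taking the explicit formula~\eqref{eq:C} from Corollary~\ref{corclosed} as input. Thus it suffices to evaluate the three quantities $g_r(1)$, $g_r'(1)$, and $\tfrac{d}{dy}\bigl(y\,g_r'(y)\bigr)\big|_{y=1} = g_r'(1) + g_r''(1)$, and then substitute into $\mu_r = g_r'(1)/g_r(1)$ and $\sigma_r^2 = \bigl(g_r'(1)+g_r''(1)\bigr)/g_r(1) - \mu_r^2$.

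First I would record the relevant evaluations at $q=1$. Since $\sqrt{q^2+4}\big|_{q=1} = \sqrt5$, we get $\beta_\pm(1) = \tfrac{5\pm\sqrt5}{2}$ and $c_\pm(1) = \tfrac{5\pm\sqrt5}{10}$, so that
\[
g_r(1) = c_+(1)\beta_+(1)^{r-1} + c_-(1)\beta_-(1)^{r-1} = \frac{(5+\sqrt5)^r + (5-\sqrt5)^r}{10\cdot 2^{\,r-1}},
\]
which already exhibits the denominator appearing in the statement, the powers of $2$ cancelling in the ratios. Next I would compute $\beta_\pm'(1)$, $\beta_\pm''(1)$, $c_\pm'(1)$, $c_\pm''(1)$ by differentiating the rational-plus-radical expressions of Corollary~\ref{corclosed} (using $\tfrac{d}{dq}\sqrt{q^2+4} = q/\sqrt{q^2+4}$), and then apply the product and chain rules:
\[
\frac{d}{dq}\!\left[c_\pm(q)\,\beta_\pm(q)^{r-1}\right] = c_\pm'(q)\,\beta_\pm(q)^{r-1} + (r-1)\,c_\pm(q)\,\beta_\pm(q)^{r-2}\beta_\pm'(q),
\]
together with the analogous second derivative, which produces a term carrying the factor $(r-1)(r-2)$ and hence the $r^2$ contributions visible in $\sigma_r^2$. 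Evaluating at $q=1$ and summing the $+$ and $-$ branches gives $g_r'(1)$ and $g_r''(1)$ as $\mathbb{Z}[\sqrt5]$-combinations of $(5\pm\sqrt5)^{r}$, and squaring for $\mu_r^2$ introduces $(5\pm\sqrt5)^{2r}$ together with the cross term $(5+\sqrt5)^r(5-\sqrt5)^r = 20^r$.

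Finally I would substitute into the formulas of Proposition~\ref{prop:1} and simplify: the powers of $2$ cancel, the identity $\beta_+(1)\beta_-(1) = 5$ collapses the mixed products, and clearing the common denominator $g_r(1)^2$ puts everything over $25\bigl((5-\sqrt5)^r+(5+\sqrt5)^r\bigr)^2$, yielding the claimed closed form. The main obstacle is purely computational bookkeeping — carrying the $\sqrt5$ terms through both branches, combining over the common denominator, and matching the exact integer coefficients (such as the $13(\cdots)r$ and $-37\cdot 20^r$ terms) in the stated $\sigma_r^2$; this is the "straightforward but lengthy" calculation, entirely parallel to the one already carried out in Proposition~\ref{prop:B}, and a symbolic computation can be used to confirm the final simplification.
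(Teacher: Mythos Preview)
Your proposal is correct and follows essentially the same approach as the paper: the paper's proof consists of the single sentence ``Applying the result in Proposition~\ref{prop:1} to Equation~\eqref{eq:C} yields the desired result, albeit after some straightforward, but lengthy, calculations,'' which is precisely the computation you outline. In fact, your write-up supplies more of the intermediate detail (the evaluations $\beta_\pm(1)=\tfrac{5\pm\sqrt5}{2}$, $c_\pm(1)=\tfrac{5\pm\sqrt5}{10}$, the form of $g_r(1)$, and the mechanism by which $20^r$ arises as the cross term) than the paper itself does.
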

\begin{proof}
Applying the result in Proposition \ref{prop:1} to  Equation \eqref{eq:C} yields the desired result, albeit after some straightforward, but lengthy, calculations.
\end{proof}


\begin{proposition}\label{prop:D}The mean and variance of $Y_{D_r}$ are given by 
\[ \mu_r= \displaystyle\frac{\left(15-\sqrt{5}+r\left(-5+7\sqrt{5}\right)\right)\left(5-\sqrt{5}\right)^r+\left(15+\sqrt{5}-r\left(5+7\sqrt{5}\right)\right)\left(5+\sqrt{5}\right)^r}{10\sqrt{5}\left(\left(5-\sqrt{5}\right)^r-\left(5+\sqrt{5}\right)^r\right)}\] 
and 
\begin{align*}
\sigma_r^2 &=\displaystyle\frac{\frac{20^{r+1}}{4}r^2-\left[13\left(\left(5+\sqrt{5}\right)^{2r}+\left(5-\sqrt{5}\right)^{2r}\right)+\frac{20^{r+1}}{5}\right]r}{-25\left[\left(5+\sqrt{5}\right)^r-\left(5-\sqrt{5}\right)^r\right]^2} \\
& \qquad + \frac{\left(34-3\sqrt{5}\right)\left(5+\sqrt{5}\right)^{2r}+\left(34+3\sqrt{5}\right)\left(5-\sqrt{5}\right)^{2r}-23\cdot 20^r}{-25\left[\left(5+\sqrt{5}\right)^r-\left(5-\sqrt{5}\right)^r\right]^2},
\end{align*}

respectively. \end{proposition}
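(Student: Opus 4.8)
The plan is to mirror the proofs of Propositions~\ref{prop:B} and~\ref{prop:C}: apply Proposition~\ref{prop:1} to the closed formula \eqref{eq:D} for $\P_{D_r}(q)$, and then simplify. Concretely, we set $g_r(y)=\P_{D_r}(y)=d_+(y)\,(\beta_+(y))^{r-4}+d_-(y)\,(\beta_-(y))^{r-4}$, where $\beta_\pm$ and $d_\pm$ are the rational functions in $y$ given in Corollary~\ref{corclosed}. First I would record the values at $y=1$: since $\beta_\pm(1)=\frac{5\pm\sqrt5}{2}$, we have $\beta_+(1)\beta_-(1)=5$ and $\beta_+(1)+\beta_-(1)=5$, so $\beta_\pm(1)=\frac{1}{10}(5\pm\sqrt5)\cdot 10/2$ scales cleanly, and $(\beta_+(1))^{r-4}$ and $(\beta_-(1))^{r-4}$ are what produce the $(5\pm\sqrt5)^r$ factors in the stated answer (the $2^{-(r-4)}$ and $\sqrt5$-denominator bookkeeping is what turns $d_\pm(1)$ into the constants $15\mp\sqrt5$, etc.). Then $g_r(1)=\P_{D_r}(1)=\wp(\hroot_{D_r})$ is, up to normalization, the quantity in the denominator of $\mu_r$.

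Next I would compute $g_r'(1)$. Writing $h_\pm(y)=d_\pm(y)(\beta_\pm(y))^{r-4}$, the product/chain rule gives $h_\pm'(y)=d_\pm'(y)(\beta_\pm(y))^{r-4}+(r-4)d_\pm(y)\beta_\pm'(y)(\beta_\pm(y))^{r-5}$, so $g_r'(1)$ is a linear-in-$r$ combination of $(\beta_+(1))^{r}$ and $(\beta_-(1))^{r}$ with coefficients built from $d_\pm(1)$, $d_\pm'(1)$, $\beta_\pm'(1)$, and $\beta_\pm(1)$. Differentiating $\beta_\pm(y)=\frac{(y^2+2y+2)\pm y\sqrt{y^2+4}}{2}$ at $y=1$ is elementary, as is differentiating the explicit polynomials over $y^2+4$ defining $d_\pm$. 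Then $\mu_r=g_r'(1)/g_r(1)$; collecting the $(5\pm\sqrt5)^r$ and $r(5\pm\sqrt5)^r$ terms and clearing the common powers of $2$ and the $(y^2+4)|_{y=1}=5$ denominators should reproduce the stated $\mu_r$. For $\sigma_r^2$, I would use the second formula in Proposition~\ref{prop:1}, namely $\sigma_r^2=\frac{(yg_r'(y))'|_{y=1}}{g_r(1)}-\mu_r^2$; here $(yg_r'(y))'|_{y=1}=g_r'(1)+g_r''(1)$, so I also need $g_r''(1)$, which introduces a quadratic-in-$r$ term coming from the $(r-4)(r-5)$ factor when $\beta_\pm$ is differentiated twice — this is the source of the $20^{r+1}r^2$ (equivalently $(5\pm\sqrt5)^{2r}r^2$ after squaring $g_r(1)$) leading terms in the stated $\sigma_r^2$.

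The main obstacle is purely computational bulk: the polynomials $d_\pm(y)$ are degree-$7$ numerators over $y^2+4$, so $d_\pm(1)$, $d_\pm'(1)$, $d_\pm''(1)$ each require care, and then forming $g_r'(1)/g_r(1)$ and the variance expression produces expressions with $(5-\sqrt5)^r$, $(5+\sqrt5)^r$, and their squares that must be combined and simplified against a squared denominator. I would organize this by first tabulating the eight numbers $\beta_\pm(1),\beta_\pm'(1),\beta_\pm''(1),d_\pm(1),d_\pm'(1),d_\pm''(1)$, then assembling $g_r(1),g_r'(1),g_r''(1)$ as explicit $\mathbb{Z}[\sqrt5]$-linear combinations of $(5\pm\sqrt5)^r$ and $r(5\pm\sqrt5)^r$ (and, for $g_r''$, $r^2(5\pm\sqrt5)^r$), and finally substituting into $\mu_r=g_r'(1)/g_r(1)$ and $\sigma_r^2=(g_r'(1)+g_r''(1))/g_r(1)-\mu_r^2$. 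Because these are long but entirely mechanical rational-function manipulations — best checked with a computer algebra system — I would simply state, as in Propositions~\ref{prop:B}--\ref{prop:D}, that the result follows from Proposition~\ref{prop:1} applied to Equation~\eqref{eq:D} after straightforward but lengthy calculations, and optionally remark that the dominant behavior $\mu_r\sim\frac{r}{2}$ and $\sigma_r^2\sim\frac{r}{4}$ (consistent with the asymptotic Gaussian of Corollary~\ref{DGauss}) serves as a sanity check on the closed forms.
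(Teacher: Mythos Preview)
Your approach is exactly the paper's: it applies Proposition~\ref{prop:1} to Equation~\eqref{eq:D} and records that the resulting rational-function manipulations are straightforward but lengthy. One small correction to your closing remark: the asymptotics $\mu_r\sim r/2$ and $\sigma_r^2\sim r/4$ are the type~$A$ values; for type~$D$ the dominant $(5+\sqrt5)^{2r}$ terms give $\mu_r\sim\dfrac{7+\sqrt5}{10}\,r$ and $\sigma_r^2\sim\dfrac{13}{25}\,r$, so that particular sanity check should be adjusted (it does not affect the proof itself).
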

\begin{proof}
Applying the result in Proposition \ref{prop:1} to Equation \eqref{eq:D} yields the desired result, albeit after some straightforward, but lengthy, calculations.
\end{proof}

\section{Future work}\label{sec:future}
As we have shown, the distribution of the number of positive roots used in the decompositions of certain weights, including the highest root, of the classical Lie algebras converges to a Gaussian. Hence we pose the following:
\begin{problem}\label{prob:1}
Give necessary and sufficient conditions on the weight $\mu$ of a Lie algebra of rank $r$, such that the (normalized) distribution of the number of positive roots used in the decompositions of $\mu$ as a sum of positive roots converges to a Gaussian distribution as $r\to\infty$.
\end{problem}
We point the reader to \cite{BKS}, which may provide some possible tools to answer Problem~\ref{prob:1}.

\section*{Acknowledgements}
The authors thank Diana Davis for preliminary conversations on the topic of this project.

\begin{bibdiv}
\begin{biblist}
\bib{MR3731462}{article}{
    AUTHOR = {Bagui, S.},Author={Mehra, K. L.},
     TITLE = {
Convergence of Binomial to Normal: Multiple Proofs},
   JOURNAL = {International Mathematical Forum},
    VOLUME = {12},
      YEAR = {2017},
    NUMBER = {9},
     PAGES = {399--411},
             DOI = {10.12988/imf.2017.7118},
       URL = {https://doi.org/10.12988/imf.2017.7118},
}

\bib{MR3513859}{article}{
    AUTHOR = {Ben-Ari, I.},AUTHOR = {Miller, Steven J.},
     TITLE = {A probabilistic approach to generalized {Z}eckendorf
              decompositions},
  JOURNAL = {SIAM Journal on Discrete Mathematics},
    VOLUME = {30},
      YEAR = {2016},
    NUMBER = {2},
     PAGES = {1302--1332},
      ISSN = {0895-4801},
       DOI = {10.1137/140996859},
       URL = {https://doi.org/10.1137/140996859},
}

\bib{Bender}{article}{
    AUTHOR = {Bender, E. A.},
     TITLE = {Central and Local Limit Theorems Applied to Asymptotic Enumeration},
   JOURNAL = {Journal of Combinatorial Theory, A},
    VOLUME = {15},
      YEAR = {1973},
     PAGES = {91-111}
}



\bib{BKS}{article}{
    AUTHOR = {S. C. Billey},author={M. Konvalinka},author={J. P. Swanson
},
     TITLE = {Tableaux posets and the fake degrees of coinvariant algebras},
   JOURNAL = {ArXiV},
      YEAR = {2018},
       URL = {https://arxiv.org/pdf/1809.07386.pdf},
}

\bib{MR3584005}{article}{
    AUTHOR = {Catral, M.},Author={Ford, P. L.},Author={Harris, P. E.},Author={
              Miller, S. J.},Author={ Nelson, D.},
     TITLE = {Legal decomposition arising from non-positive linear
              recurrences},
  JOURNAL = {The Fibonacci Quarterly. The Official Journal of the Fibonacci
              Association},
    VOLUME = {54},
      YEAR = {2016},
    NUMBER = {4},
     PAGES = {348--365},
      ISSN = {0015-0517},
}
\bib{MR3479490}{article}{
    AUTHOR = {Catral, M.},Author={Ford, P.},Author={ Harris, P. E.},Author={Miller,
              S. J.},Author={Nelson, D.},
     TITLE = {Generalizing {Z}eckendorf's theorem: the {K}entucky sequence},
  JOURNAL = {The Fibonacci Quarterly. The Official Journal of the Fibonacci
              Association},
    VOLUME = {52},
      YEAR = {2014},
    NUMBER = {5},
     PAGES = {68--90},
      ISSN = {0015-0517},
}

\bib{MR3696270}{article}{
    AUTHOR = {Catral, M.}, AUTHOR = {Ford, P. L.}, AUTHOR = {Harris, P. E.}, AUTHOR = {
              Miller, S. J.}, AUTHOR = {Nelson, D.}, AUTHOR = {Pan, Z.}, AUTHOR = {Xu,
              H.},
     TITLE = {New behavior in legal decompositions arising from non-positive
              linear recurrences},
  JOURNAL = {The Fibonacci Quarterly. The Official Journal of the Fibonacci
              Association},
    VOLUME = {55},
      YEAR = {2017},
    NUMBER = {3},
     PAGES = {252--275},
      ISSN = {0015-0517},
}

\bib{DDKMMV}{article}{
    AUTHOR = {Demontigny, P.},
    author={Do, T.},
    author={Kulkarni, A.},
    author={Miller, S. J.},
    author={Moon, D.},
    author={Varma, U.},
     TITLE = {Generalizing {Z}eckendorf's {T}heorem to {$f$}-decompositions},
JOURNAL = {Journal of Number Theory},
    VOLUME = {141},
      YEAR = {2014},
     PAGES = {136--158},
}
\bib{MR3666531}{article}{
    AUTHOR = {Dorward, R.},AUTHOR = {Ford, P. L.},AUTHOR = {Fourakis, E.},AUTHOR = {
              Harris, P. E.},AUTHOR = {Miller, S. J.},AUTHOR = {Palsson, E.
              A. },AUTHOR = {Paugh, H.},
     TITLE = {Individual gap measures from generalized {Z}eckendorf
              decompositions},
  JOURNAL = {Uniform Distribution Theory},
    VOLUME = {12},
      YEAR = {2017},
    NUMBER = {1},
     PAGES = {27--36},
      ISSN = {1336-913X},
}
\bib{MR3561734}{article}{
    AUTHOR = {Dorward, R.},
    author={Ford, P. L.},
    author={Fourakis, E.},
    author={ Harris, P. E.},
    author={Miller, S. J.},
    author={Palsson, E.},
    author={Paugh, H.},
     TITLE = {A generalization of {Z}eckendorf's theorem via circumscribed
              {$m$}-gons},
  JOURNAL = {Involve. A Journal of Mathematics},
    VOLUME = {10},
      YEAR = {2017},
    NUMBER = {1},
     PAGES = {125--150},
      ISSN = {1944-4176},
       DOI = {10.2140/involve.2017.10.125},
       URL = {https://doi.org/10.2140/involve.2017.10.125},
}
\bib{GW}{book}{
    author={Goodman, R.},
    author={Wallach, N. R.},
     title={Symmetry, Representations and Invariants},
publisher={Springer},
   address={New York},
      date={2009},
      ISBN={978-0-387-79851-6},
    review={\MR{2011a:20119}},
}


\bib{HIO}{article}{
    AUTHOR = {Harris, P. E.},
    author={Insko, E.},
    author={Omar, M.},
     TITLE = {The {$q$}-analog of {K}ostant's partition function and the
              highest root of the simple {L}ie algebras},
JOURNAL = {The Australasian Journal of Combinatorics},
    VOLUME = {71},
      YEAR = {2018},
     PAGES = {68--91},
}

\bib{PHThesis}{thesis}{
    author={Harris, P. E.},
    title={Combinatorial problems related to Kostant's weight multiplicity formula},
    school={University of Wisconsin, Milwaukee},
    year={2012},
    address={},
}



\bib{HIS}{article}{
    author={Harris, P. E.},
    author={Insko, E.},
    author={Simpson, A.},
    title={Computing weight $q$-multiplicities for the representations of the simple Lie algebras},
    journal={A. AAECC},
    volume={},
    year={2017},
    pages={},
    review={},
    note={https://doi.org/10.1007/s00200-017-0346-7},
}

 \bib{HIW}{article}{
     author={Harris, P. E.},
     author={Inkso, E.},
     author={Williams, L. K.},
     title={The adjoint representation of a classical Lie algebra and the support of Kostant's weight multiplicity formula},
     journal={Journal of Combinatorics},
     volume={7},
     year={2016},
     number={1},
     pages={75-116},
     review={},
 }

\bib{KMF}{article}{
    author={Kostant, B.},
     title={A formula for the multiplicity of a weight},
      date={1958},
   journal={Proc. Nat. Acad. Sci. U.S.A.},
    volume={44},
     pages={588\ndash 589},
    review={\MR{20 \#5827}},
}
\bib{Miller1}{article}{
    AUTHOR = {Kolo\u{g}lu, M.},AUTHOR = { Kopp, G. S.},AUTHOR = {Miller, S. J.},AUTHOR = {              Wang, Y.},
     TITLE = {On the number of summands in {Z}eckendorf decompositions},
   JOURNAL = {Fibonacci Quart.},
  FJOURNAL = {The Fibonacci Quarterly. The Official Journal of the Fibonacci
              Association},
    VOLUME = {49},
      YEAR = {2011},
    NUMBER = {2},
     PAGES = {116--130},
      ISSN = {0015-0517},
   MRCLASS = {11B39 (60F05 65B99)},
  MRNUMBER = {2801798},
MRREVIEWER = {Mark Bollman},
}

\bib{Lekkerkerker}{article}{
    AUTHOR = {Lekkerkerker, C. G.},
     TITLE = {Representation of natural numbers as a sum of {F}ibonacci
              numbers},
  JOURNAL = {Simon Stevin. A Quarterly Journal of Pure and Applied
              Mathematics},
    VOLUME = {29},
      YEAR = {1952},
     PAGES = {190--195},
      ISSN = {0037-5454},
}

\bib{LL}{article}{
    author={Lusztig, G.},
     title={Singularities, character formulas, and a $q$-analog of weight multiplicities},
      date={1983},
   journal={Ast$\acute{\text{e}}$risque},
    volume={101-102},
     pages={208\ndash 229},
    review={\MR{85m:17005}},
}

\bib{Zeckendorf}{article}{
    AUTHOR = {Zeckendorf, E.},
     TITLE = {Repr\'{e}sentation des nombres naturels par une somme de nombres
              de {F}ibonacci ou de nombres de {L}ucas},
  JOURNAL = {Bulletin de la Soci\'{e}t\'{e} Royale des Sciences de Li\`ege},
    VOLUME = {41},
      YEAR = {1972},
     PAGES = {179--182},
      ISSN = {0037-9565},
}

\end{biblist}
\end{bibdiv}

\addresseshere

\appendix
\section{Alternate Proof of Theorem \ref{thm:type specific}}\label{app: alt proof}

To prove Theorem \ref{thm:type specific}, we  proceed via a case-by-case analysis and establish that the moment generating function of $Y_{\mathfrak{g},r}'$ converges to that of the standard normal, which is $e^{t^{2}/2}$.

\subsection{Type \texorpdfstring{$A$}{A}}\label{sec:A}
First we prove Theorem \ref{thm:type specific} for the Lie algebra of type $A_r$.
\begin{reptheorem}{thm:type specific}[Type $A_r$]
Let $\mu_{r}$ and $\sigma_r^2$ be defined as in Proposition \ref{prop:2}. Then the random variable $Y'_r=\left(Y_r-\mu_r\right)/\sigma_r^2$ converges to the standard Gaussian distribution as $r\to\infty$.
\end{reptheorem}

\begin{proof}
Recall $g_r\left(y\right)=y\left(1+y\right)^{r-1}$, hence 
\begin{align}
\log[g_r\left(e^n\right)]&=\log[e^n\left(1+e^n\right)^{r-1}]\nonumber\\
&=\log\left(e^n\right)+\left(r-1\right)\log\left(1+e^n\right)\nonumber\\
&=\log\left(1+n+\frac{n^2}{2}\right)+\left(r-1\right)\log\left(2+n+\frac{n^2}{2}\right)+O\left(n^3\right).\label{eq:2}
\end{align}
Using Taylor's series expansion for $\log\left(x\right)$ we have 
\begin{align}
\log\left(1+n+\frac{n^2}{2}\right)&=\log\left(1\right)+\frac{1}{1}\left(1+n+\frac{n^2}{2}-1 \right)-\frac{1}{1}\frac{\left( 1+n+\frac{n^2}{2}-1\right)^2}{2}+O\left(n^3\right)\nonumber\\
&=n+\frac{n^2}{2}-\frac{1}{2}\left( n+\frac{n^2}{2}\right)^2+O\left(n^3\right)\nonumber\\
&=n+\frac{n^2}{2}-\frac{1}{2}\left( n^2+n^3+\frac{n^4}{4}\right)+O\left(n^3\right)\nonumber\\
&=n+O\left(n^3\right)\label{eq:3}
\end{align}
and
\begin{align}
\log\left( 2+n+\frac{n^2}{2}\right)&=\log\left(2\right)+\frac{1}{2}\left(2+n+\frac{n^2}{2}-2 \right)-\frac{1}{4}\frac{\left( 2+n+\frac{n^2}{2}-2\right)^2}{2}+O\left(n^3\right)\nonumber\\
&=\log\left(2\right)+\frac{1}{2}\left( n+\frac{n^2}{2}\right)-\frac{1}{8}\left(n+\frac{n^2}{2}\right)^2+O\left(n^3\right)\nonumber\\
&=\log\left(2\right)+\frac{1}{2}n+\frac{1}{8}n^2+O\left(n^3\right).\label{eq:4}
\end{align}
Substituting Equations \eqref{eq:3} and \eqref{eq:4} into Equation \eqref{eq:2} yields
\begin{align}
\log[g_{r}\left(e^n\right)]&=[n+O\left(n^3\right)]+\left(r-1\right)[\log\left(2\right)+\frac{1}{2}n+\frac{1}{8}n^2+O\left(n^3\right)]+O\left(n^3\right)\nonumber\\
&=n+\left(r-1\right)\log\left(2\right)+\frac{1}{2}n\left(r-1\right)+\frac{1}{8}n^2\left(r-1\right)+O\left(n^3\right)\nonumber\\
&=n+\frac{1}{2}n\left(r-1\right)+\frac{1}{8}n^2\left(r-1\right)+O\left(n^3\right)+\log\left(g_r\left(1\right)\right)\label{eq:sub}.
\end{align}
Recall 
\begin{align}
\log\left(M_{Y_r'}\left(t\right)\right)&=\log[g_r\left(e^n\right)]-\log[g_r\left(1\right)]-\frac{t\mu_r}{\sigma_r}\label{eq:main}
\end{align}
where $\mu_r=\frac{r+1}{2}$, $\sigma_r=\sqrt{\frac{r-1}{4}}$, and $n=\frac{t}{\sigma_r}=\frac{t}{\sqrt{\frac{r-1}{4}}}=\frac{2t}{\sqrt{r-1}}.$ 
Substituting Equation \eqref{eq:sub} and $n=\frac{2t}{\sqrt{r-1}}$ into Equation \eqref{eq:main} yields
\begin{align}
\log[M_{Y'_r}\left(t\right)]&=\frac{2t}{\sqrt{r-1}}+\frac{t}{\sqrt{r-1}}\left(r-1\right)+\frac{1}{8}\left(\frac{2t}{\sqrt{r-1}}\right)^2\left(r-1\right)-\frac{t\left(\frac{r+1}{2}\right)}{\frac{\sqrt{r-1}}{2}}+O\left(\left(\frac{2t}{\sqrt{r-1}}\right)^3\right)\nonumber\\
&=\frac{1}{2}t^2+O\left(\left(\frac{2t}{\sqrt{r-1}}\right)^3\right).\label{eq:take limit here type A}
\end{align}
Taking the limit of Equation \eqref{eq:take limit here type A} as $r\to\infty$, we have that $\log(M_{Y_r'}(t))$ converges to $\frac{1}{2}t^2$. Thus $Y_r'$ converges to the standard normal distribution as $r\to\infty$.
\end{proof}

\subsection{Technical Results for other Lie Types}

For Types $B$, $C$, and $D$, we also show that $\log\left[M_{Y_{\mathfrak{g},r}'(t)}\right]$ converges to $\frac12t^2$ as $r \to \infty$, thus proving that $Y_{\mathfrak{g},r}'$ converges to the standard normal distribution as $r\to \infty$. In this section, we work through simplifying the equation for $\log\left[M_{Y_{\mathfrak{g},r}'(t)}\right]$, as many terms overlap for the various Lie types.

Recall from Corollary \ref{corclosed} that $\wp_q\left(\hroot\right)=\mathfrak{g}_{+}\left(q\right) \cdot \left(\beta_+(q)\right)^{r-i_\mathfrak{g}} + \mathfrak{g}_{-}\left(q\right) \cdot \left(\beta_-(q)\right)^{r-i_\mathfrak{g}}$,
where $\mathfrak{g}_\pm \in \left\{b_\pm,c_\pm,d_\pm\right\}$ and 
\[i_\mathfrak{g}= \begin{cases} 
      2, & \mathfrak{g}=B_r \\
      1, & \mathfrak{g}=C_r \\
      4, & \mathfrak{g}=D_r 
   \end{cases}.
\]
Hence, if we let the random variable $Y_{\mathfrak{g},r}$ denote the total number of positive roots used in the decompositions of the highest root of the Lie algebra of type $\mathfrak{g}$ as sums of positive roots, we can write $g_r(y)=\wp_q(\hroot)|_{q=y}$ and $\log\left[M_{Y_{\mathfrak{g},r}'(t)}\right] = \log[g_r(e^n)]-\log[g_r(1)]-\frac{t\mu_{\mathfrak{g},r}}{\sigma_{\mathfrak{g},r}}$. Let
\begin{align}
    M&=\mathfrak{g}_{+}\left(e^n\right) \cdot  \left(\beta_+(e^n)\right)^{r-i_\mathfrak{g}}, \label{Meqn} \\
    A&=\mathfrak{g}_{-}\left(e^n\right) \cdot \left(\beta_-(e^n)\right)^{r-i_\mathfrak{g}}, \label{Aeqn} \mbox{\qquad \qquad and }\\
    S&= A/M = \frac{\mathfrak{g}
    _-(e^n)}{\mathfrak{g}_+(e^n)}\left(\frac{\beta_-(e^n)}{\beta_+(e^n)}\right)^{r-i_\mathfrak{g}}. \label{Seqn}
\end{align}
Then 
\begin{align}
    \log\left[M_{Y_{\mathfrak{g},r}'(t)}\right] &= \log[M]+\log[1+S]-\log[g_r(1)]-\frac{t\mu_{\mathfrak{g},r}}{\sigma_{\mathfrak{g},r}}. \label{BigEqnMY}
\end{align}

We first evaluate
\begin{align*}
    \beta_{\pm}(e^n) &= \frac{(e^{2n}+2e^n+2)\pm e^n\sqrt{e^{2n}+4}}{2}.
\end{align*}
Then, using Taylor expansion of $y = e^x$ about $x=0$, we replace $e^n$ with $1+n+\frac12n^2+O(n^3)$ and obtain
\begin{align*}
    \beta_{\pm}(e^n) &= \frac{(5+4n+3n^2+O(n^3))\pm (1+n+\frac12n^2+O(n^3))\sqrt{5+2n+2n^2+O(n^3)}}{2}.
\end{align*}

By Taylor expanding $y = \sqrt{x}$ about $x = 5$ and then replacing $x = 5+2n+2n^2$ and simplifying, we get
\begin{align}
    \beta_+(e^n) &= \frac{5}{2} + \frac{\sqrt{5}}{2}+\left(2+\frac{3}{\sqrt{5}}\right)n+\left(\frac32+\frac{11}{5\sqrt{5}}\right)n^2+O(n^3) \mbox{\qquad and}\label{BetaPlusSimp}\\
    \beta_-(e^n) &=\frac{5}{2} - \frac{\sqrt{5}}{2}+\left(2-\frac{3}{\sqrt{5}}\right)n+\left(\frac32-\frac{11}{5\sqrt{5}}\right)n^2+O(n^3)\label{BetaMinusSimp}.
\end{align}

Using Equation (\ref{BetaPlusSimp}), we can rewrite
\begin{align*}
    \log[M]=\log\left[\mathfrak{g}_+(e^n)\right] + \left(r-i_\mathfrak{g}\right)\log\left[\frac{5}{2} + \frac{\sqrt{5}}{2}+\left(2+\frac{3}{\sqrt{5}}\right)n+\left(\frac32+\frac{11}{5\sqrt{5}}\right)n^2+O(n^3)\right].
\end{align*}

Now we Taylor expand $y=\log(x)$ about $x = \frac{5}{2}+\frac{\sqrt{5}}{2}$ and replace $x = \beta_+(e^n)$ as in Equation (\ref{BetaPlusSimp}) to get
\begin{align}
    \log[M]&=\log\left[\mathfrak{g}_+(e^n)\right] + \left(r-i_\mathfrak{g}\right)\left[\frac{(13+5\sqrt{5})n}{5(3+\sqrt{5})}+\frac{13n^2}{50}+\log\left(\frac12(5+\sqrt{5})\right)+O(n^3)\right].\label{LogMSimp}
\end{align}

To simplify Equation (\ref{Seqn}), we denote $N=\left(\frac{\beta_-(e^n)}{\beta_+(e^n)}\right)^{r-i_\mathfrak{g}}$, which we can simplify by Taylor expanding $y = 1/x$ about $x = \frac{5}{2} + \frac{\sqrt{5}}{2}$ and then replacing $x = \beta_{+}(e^n)$ as in Equation (\ref{BetaPlusSimp}) and multiplying this Taylor expansion to Equation (\ref{BetaMinusSimp}). We obtain
\begin{align*}
    N &= \left(\frac{1}{(5+\sqrt{5})^3}\right)^{r-i_\mathfrak{g}}\left[\left(20(5+\sqrt{5})\right) + n\left(-20(1+\sqrt{5})+2(5+\sqrt{5})n\right)\right]^{r-i_\mathfrak{g}}.
\end{align*}

Using the Binomial Theorem, we have 
\begin{align}
    N&= \left(\frac{1}{(5+\sqrt{5})^3}\right)^{r-i_\mathfrak{g}}\left[\left(20(5+\sqrt{5})\right)^{r-i_\mathfrak{g}} \right. \label{Neqn}\\
    &\left.+(r-i_\mathfrak{g})\left(20(5+\sqrt{5})\right)^{r-i_\mathfrak{g}-1} \left(n\left(-20(1+\sqrt{5})+2(5+\sqrt{5})n\right)\right)\right. \nonumber \\
    &\left.+\frac{(r-i_\mathfrak{g})(r-i_\mathfrak{g}-1)}{2}\left(20(5+\sqrt{5})\right)^{r-i_\mathfrak{g}-2}\left(n\left(-20(1+\sqrt{5})+2(5+\sqrt{5})n\right)\right)^2 + O(n^3)\right]. \nonumber
\end{align}

In order to completely describe Equation (\ref{BigEqnMY}), we need to work through each Lie algebra type separately.

\subsubsection{Type $C$} 
Next we prove Theorem \ref{thm:type specific} for the Lie algebra of type $C_r$.
\begin{reptheorem}{thm:type specific}[Type $C_r$]
\label{them:GaussianC}
Let $\mu_{r}$ and $\sigma_r^2$ be defined as in Proposition \ref{prop:D}. Then the random variable $Y'_{C_r}=\left(Y_{C_r}-\mu_r\right)/\sigma_r^2$ converges to the standard Gaussian distribution as $r\to\infty$.
\end{reptheorem}

\begin{proof}
Recall 
\begin{align}\label{typeC:coeffs}
    c_{\pm}\left(q\right) \ &= \ 
\dfrac{\left(q^3+4q\right) \pm q^2\sqrt{q^2+4}}{2\left(q^2+4\right)}.
\end{align}
Replacing $e^n$ with $1+n+\frac12 n^2 + O(n^3)$ gives
\begin{align*}
    c_{\pm}\left(e^n\right) &= \frac{\left(5+7n+\frac{13n^2}{2}+ O(n^3)\right)\pm\left(1+2n+2n^2+ O(n^3)\right)\sqrt{5+2n+2n^2+ O(n^3)}}{10+4n+4n^2+ O(n^3)}.
\end{align*}

Now, Taylor expanding $y=\sqrt{x}$ about $x=5$ and $z = 1/x$ about $x = 10$, and replacing $x = 5+2n+2n^2$ and $x = 10+4n+4n^2$, respectively, gives
\begin{align}
    c_{+}\left(e^n\right) &= \left(\left(5+7n+\frac{13n^2}{2}\right)+\left(1+2n+2n^2\right)\left(\sqrt{5}+\frac{n}{\sqrt{5}}+\frac{9n^2}{10\sqrt{5}}\right)\right)\left(\frac{1}{10}-\frac{n}{25}-\frac{3n^2}{125}\right)+O(n^3) \nonumber \\
    &= \frac{1}{500}\left(50(5+\sqrt{5})+10(25+9\sqrt{5})n+(125+73\sqrt{5})n^2\right)+O(n^3). \label{CPlus}
\end{align}
Similarly,
\begin{align}
    c_{-}\left(e^n\right) &= \frac{1}{500}\left(50(5-\sqrt{5})+10(25-9\sqrt{5})n+(125-73\sqrt{5})n^2\right)+O(n^3). \label{CMinus}
\end{align}
Taylor expanding $y = 1/x$ about $x = \frac{1}{500}(50(5+\sqrt{5}))$, replacing $x = C_+(e^n)$ as in Equation (\ref{CPlus}), and multiplying the resulting expression to Equation (\ref{CMinus}) gives \begin{align}
    \frac{c_-(e^n)}{c_+(e^n)} &= \frac{1}{(5+\sqrt{5})^3}\left[20(5+\sqrt{5})-40(1+\sqrt{5})n-8(-3+\sqrt{5})n^2\right]+O(n^3). \label{CpmRatio}
\end{align}
Taylor expanding $y = \log(x)$ about $x = \frac{1}{500}(50(5+\sqrt{5}))$ and replacing $x = C_+(e^n)$ as in Equation (\ref{CPlus}) gives
\begin{align}
    \log\left(c_+(e^n)\right) &= \frac{1}{25(3+\sqrt{5})}\left[(85+35\sqrt{5})n+2(-1+\sqrt{5})n^2\right]\label{LogCPlus} \\
    & \mbox{\qquad} -\log(2)-\log(5)+\log(5+\sqrt{5})+O(n^3). \nonumber
\end{align}
Now, we can substitute Equation (\ref{LogCPlus}) into Equation (\ref{LogMSimp}), where $i_\mathfrak{g}=1$, to get 
\begin{align}
    \log[M] &= \frac{(4+2\sqrt{5}+13r+5\sqrt{5}r)n}{5(3+\sqrt{5})}+\frac{(-43-9\sqrt{5}+13(3+\sqrt{5})r)n^2}{50(3+\sqrt{5})}  \label{logMTypeC} \\
    & \mbox{\qquad} -r\log(2)-\log(5)+r\log(5+\sqrt{5})+O(n^3). \nonumber
\end{align}
Substituting Equations (\ref{CpmRatio}) and (\ref{Neqn}) (with $i_\mathfrak{g} = 1$) into Equation (\ref{Seqn}), we get an equation for $1+S$:
\begin{align}
    1+S &= 1 + 4^{1+r} 5^{r-1/2} (5+\sqrt{5})^{-3-2 r} \left[50 (2+\sqrt{5})-10 (5+2 \sqrt{5}) n (1+r)\right.\label{Insane1+S}\\\nonumber
    &\left.\qquad+n^2 (-30-11 \sqrt{5}+10 (2+\sqrt{5}) r+5 (2+\sqrt{5}) r^2)\right] + O(n^3).
\end{align}
Next, we can Taylor expand $y = \log(x)$ about $x =  1 + 4^{1+r} 5^{r-1/2} (5+\sqrt{5})^{-3-2 r}50 (2+\sqrt{5}) $ and replace $x = 1+S$ as in Equation (\ref{Insane1+S}) to obtain
\begin{align}
    \log(1+S)&= \label{InsaneLog1+S}
 \log\left[1 + 20^r (5 + \sqrt{5})^{-2 r}\right]-
 \frac{  2^{ 2 r-1} 5^{r-2}
    n}{(20^r + (5 + \sqrt{5})^{2 r})^2)}
 \left[10 \sqrt{5} (20^r + (5 + \sqrt{5})^{2 r}) (1 + r)  \right.\\
 &+\left.
     n \left(2^{3 + 2 r} 5^{1/2 + r} + (5 + \sqrt{5})^{2 r} (-5 + 8 \sqrt{5}) - 10 (5 + \sqrt{5})^{2 r} r - 
        5 (5 + \sqrt{5})^{2 r} r^2\right)\right]+O(n^3).\nonumber 
\end{align}
Given Equations (\ref{logMTypeC}) and (\ref{InsaneLog1+S}), $g_r(1)=\frac{1}{5}\left(\frac{5+\sqrt{5}}{2}\right)^r+\frac{1}{5}\left(\frac{5-\sqrt{5}}{2}\right)^r$, $n=t/\sigma_r$, and $\mu_r$ and $\sigma_r$ as in Proposition \ref{prop:C}, we find that
\begin{align*}
    \log{\left(M_{Y_r'(t)}\right)} &=\log[g_r(e^n)]-\log[g_r(1)]-\frac{t\mu_r}{\sigma_r}=k_0+k_1 t+k_2 t^2+O\left(\left(\frac{t}{\sigma_r}\right)^3\right)
\end{align*}
where $k_0 = 0$, $k_1=0$, and
\begin{align*}
    k_2&= \frac{\left((5-\sqrt{5})^r+(5-\sqrt{5})^r\right)^2}{2(3+\sqrt{5})\left(20^r+(5+\sqrt{5})^{2r}\right)^2} \left[-37\cdot20^r + (4\sqrt{5}-21)(5+\sqrt{5})^{2r} -(5-\sqrt{5})^{2r}(21+4\sqrt{5}) \right.\\
    &\nonumber \qquad \left.+ r\left(36\cdot20^r+13(5-\sqrt{5})^{2r}+13(5+\sqrt{5})^{2r}\right) + 5\cdot 20^r r^2 \right]^{-1} \\
    & \nonumber \qquad 
    \left[-37\cdot 20^r (3+\sqrt{5})(5+\sqrt{5})^{2r} -(5+\sqrt{5})^{4r}(43+9\sqrt{5})-400^r(83+33\sqrt{5}) \right.\\
    & \nonumber \qquad \left. + r(3+\sqrt{5})\left(13\cdot 400^r+36\cdot 20^r(5+\sqrt{5})^{2r}+13(5+\sqrt{5})^{4r} \right) \right. \\
    & \qquad \left. + r^2 5^{r+1}(3+\sqrt{5})\left(2(5+\sqrt{5})\right)^{2r} \right].
\end{align*}
Lastly, note $\displaystyle\lim_{r\to\infty}\log{(M_{Y_r'(t)})}=\frac{1}{2}t^2$. Thus $Y_r'$ converges to the standard normal as~$r\to\infty$.
\end{proof}

\subsubsection{Type $B$} 

Our first result is as follows.
\begin{reptheorem}{thm:type specific}[Type $B_r$]
\label{them:GaussianB}
Let $\mu_{r}$ and $\sigma_r^2$ be defined as in Proposition \ref{prop:B}. Then the random variable $Y'_{B_r}=\left(Y_{B_r}-\mu_r\right)/\sigma_r^2$ converges to the standard Gaussian distribution as $r\to\infty$.
\end{reptheorem}

\begin{proof}
Recall 
\begin{align}\label{typeB:coeffs}
   b_{\pm}\left(q\right) \ &= \ 
   \dfrac{\left(q^5+q^4+5q^3+4q^2+4q\right) \pm \left(q^4+q^3+3q^2+2q\right)\sqrt{q^2+4}}{2\left(q^2+4\right)}.
\end{align}
As with Type C, replacing $e^n$ with $1+n+\frac12 n^2 + O(n^3)$, Taylor expanding $y=\sqrt{x}$ about $x=5$ and $z = 1/x$ about $x = 10$, and replacing $x = 5+2n+2n^2$ and $x = 10+4n+4n^2$, respectively, gives

\begin{align}
    b_{+}\left(e^n\right) &= \frac{3}{2}+\frac{7}{2\sqrt{5}}+\left(3+\frac{34}{5\sqrt{5}}\right)n+\left(\frac{7}{2}+\frac{194}{25\sqrt{5}}\right)n^2+O(n^3) \label{BPlus}
\end{align}
Similarly,
\begin{align}
    b_{-}\left(e^n\right) &= \frac{3}{2}-\frac{7}{2\sqrt{5}}+\left(3-\frac{34}{5\sqrt{5}}\right)n+\left(\frac{7}{2}-\frac{194}{25\sqrt{5}}\right)n^2+O(n^3). \label{BMinus}
\end{align}

Taylor expanding $y = 1/x$ about $x = \frac{3}{2}+\frac{7}{2\sqrt{5}}$, replacing $x = b_+(e^n)$ as in Equation (\ref{BPlus}), and multiplying the resulting expression to Equation (\ref{BMinus}) gives \begin{align}
    \frac{b_-(e^n)}{b_+(e^n)} &= \frac{1}{(15+7\sqrt{5})^3}\left[-20(15+7\sqrt{5})+60(7+3\sqrt{5})n+(38+6\sqrt{5})n^2\right]+O(n^3). \label{BpmRatio}
\end{align}
Taylor expanding $y = \log(x)$ about $x = \frac{3}{2}+\frac{7}{2\sqrt{5}}$ and replacing $x = b_+(e^n)$ as in Equation (\ref{BPlus}) gives
\begin{align}
    \log\left(b_+(e^n)\right) &= \frac{1}{5(15+7\sqrt{5})^2}\left[10(463+207\sqrt{5})n+(779+349\sqrt{5})n^2\right]\label{LogBPlus} \\
    & \mbox{\qquad} -\log(2)-\log(5)+\log(15+7\sqrt{5})+O(n^3). \nonumber
\end{align}

Now, we can substitute Equation (\ref{LogBPlus}) into Equation (\ref{LogMSimp}), where $i_\mathfrak{g}=2$, to get 
\begin{align}
    \log[M] &= \frac{\left(76+34\sqrt{5}+(568+254\sqrt{5})r\right)n}{5(123+55\sqrt{5})} + \frac{\left(-1157-517\sqrt{5}+(1599+715\sqrt{5})r\right)n^2}{50(123+55\sqrt{5})} \label{logMTypeB} \\
    & \mbox{\qquad} +(r-2)\log(5+\sqrt{5})-(r-1)\log(2)-\log(5)+\log(15+7\sqrt{5})+O(n^3) \nonumber.
\end{align}

Substituting Equations (\ref{BpmRatio}) and (\ref{Neqn}) (with $i_\mathfrak{g} = 2$) into Equation (\ref{Seqn}), we get an equation for $1+S$:
\begin{align}
    1+S &= 1 -\frac{4^{r+3}5^{r+2}(5+\sqrt{5})^{-2(2+r)}}{(15+7\sqrt{5})^3} \left[3600+1610\sqrt{5}-10(161+72\sqrt{5})n(r+1) \right.\label{Insane1+SB}\\
    &\nonumber\qquad\left.+n^2(-3182-14123\sqrt{5}+(720+322\sqrt{5})r+(360+161\sqrt{5})r^2) \right]+O(n^3).
\end{align}

Next, we can Taylor expand $y = \log(x)$ about $x =  1 -\frac{4^{r+3}5^{r+2}(5+\sqrt{5})^{-2(2+r)}}{(15+7\sqrt{5})^3} \left(3600+1610\sqrt{5}\right)$ and replace $x = 1+S$ as in Equation (\ref{Insane1+SB}) to obtain
\begin{align}\label{InsaneLog1+SB}
    \log{(1+S)} &= \log{\left[\frac{(15+7\sqrt{5})^3-4\cdot 20^{r+2}(5+\sqrt{5})^{-4-2r}(3600+1610\sqrt{5})}{(15+7\sqrt{5})^3}\right]}\\ 
    &\nonumber \qquad -\frac{1}{(5+\sqrt{5})^{8}(15+7\sqrt{5})^6\left( 2\cdot 20^{r}\left(360+161\sqrt{5})-(5+\sqrt{5})^{2r}(4935+2207\sqrt{5})\right)^3\right)} \\
    &\nonumber \qquad \cdot\left[ 2\cdot 20^{r+7}n\left( \left(10\cdot 400^r(162614600673847+72723460248141\sqrt{5})\right.\right.\right.\\
    &\nonumber \qquad \left. \left.\left. -40\cdot 20^r(5+\sqrt{5})^{2r}(557288527109761+249227005939632\sqrt{5})\right.\right. \right.\\
   &\nonumber \qquad \left. \left.\left. 10(5+\sqrt{5})^{4r}(7639424778862807+3416454622906707\sqrt{5})\right)(1+r)\right)\right.\\
   &\nonumber \qquad \left. +n^2\left(22\cdot 20^{2r}(162614600673847+72723460248141\sqrt{5}\right.\right.\\
   &\nonumber \qquad \left. \left. -2\cdot 20^r(5+\sqrt{5})^{2r}(23274560163131324+10408699734234047\sqrt{5})\right.\right.\\
    &\nonumber \qquad \left. \left. + (5+\sqrt{5})^{4r}(150985072020448219+67522576925084747\sqrt{5})\right.\right.\\
    &\nonumber \qquad \left. \left. +4\cdot 20^r(5+\sqrt{5})^{2r}(1246135029698160+557288527109761\sqrt{5})r\right.\right.\\
    &\nonumber \qquad \left. \left. -2\cdot (5+\sqrt{5})^{4r}(217082273114533535+7639424778862807\sqrt{5})r\right.\right.\\
   &\nonumber \qquad \left. \left. +2\cdot 20^r(5+\sqrt{5})^{2r}(1246135029698160+557288527109761\sqrt{5})r^2\right.\right.\\
    &\nonumber \qquad \left. \left. -2\cdot (5+\sqrt{5})^{4r}(217082273114533535+7639424778862807\sqrt{5})r^2\right)\right]+O(n^3).
\end{align}
Given Equations (\ref{logMTypeB}) and (\ref{InsaneLog1+SB}), $n=t/\sigma_r$, $\mu_r$ and $\sigma_r$ as in Proposition \ref{prop:B}, and 
{\small
\begin{align*}
    g_r(1) &= -\frac{5.12\cdot 10^{11}(16692641+7465176\sqrt{5})}{(5+\sqrt{5})^{12}(15+7\sqrt{5})^6}\left(r\log{\left(\frac{2}{5+\sqrt{5}}\right)}+4\log{(5+\sqrt{5})}-\log{(8(15+7\sqrt{5}))} \right.\\
    &\nonumber \left.\qquad-\log{\left(\frac{320000(5+\sqrt{5})^{-2(r+3)}(6460+2889\sqrt{5})\left(-20^r+(5+\sqrt{5})^{2r}\right)}{(15+7\sqrt{5})^3}\right)}\right),
\end{align*}
}
we find that
\begin{align*}
    \log{\left(M_{Y_r'(t)}\right)} &=\log[g_r(e^n)]-\log[g_r(1)]-\frac{t\mu_r}{\sigma_r}=k_0+k_1 t+k_2 t^2+O\left(\left(\frac{t}{\sigma_r}\right)^3\right)
\end{align*}
where $k_0$ and $k_1$ simplify to 0 and 
\begin{align*}
    k_2 &= -\left((779+349\sqrt{5})\left((5-\sqrt{5})^4(-5+3\sqrt{5})-(5+\sqrt{5})^r(5+3\sqrt{5})\right)^2\right)\\
    &\nonumber \qquad\cdot \left[2(15+7\sqrt{5})^2(-126\cdot 20^r+(73-25\sqrt{5})(5-\sqrt{5})^{2r}+(73+25\sqrt{5})(5+\sqrt{5})^{2r}\right.\\
    &\nonumber \qquad \left. +\left(72\cdot 20^r+13\left((5-\sqrt{5})^{2r}(-7+3\sqrt{5})-(5+\sqrt{5})^{2r}(7+3\sqrt{5})\right)\right)r+10\cdot20^rr^2\right] ^{-1}\\
    &\nonumber \qquad - \left(13\left((5-\sqrt{5})^{2r}(-5+3\sqrt{5})-(5+\sqrt{5})^r(5+3\sqrt{5})\right)^2(r-2)\right)\\
    &\nonumber \qquad \cdot\left[20(-126\cdot 20^r+(73-25\sqrt{5})(5-\sqrt{5})^{2r}+(73+25\sqrt{5})(5+\sqrt{5})^{2r}\right.\\
    &\nonumber \qquad \left. +(72\cdot 20^r+13((5-\sqrt{5})^{2r}(-7+3\sqrt{5})-(5+\sqrt{5})^{2r}(7+3\sqrt{5})))r+10\cdot20^r r^2\right] ^{-1}\\
    &\nonumber \qquad +\left[5\cdot 20^{r+7}\left((5-\sqrt{5})^r(-5+3\sqrt{5})-(5+\sqrt{5})^r(5+3\sqrt{5})\right)^2 \right.\\
    &\nonumber \qquad \left. \cdot \left(22\cdot 20^{2r}(162614600673847+72723460248141\sqrt{5})\right.\right.\\
   &\nonumber \qquad \left. -2\cdot 20^r(5+\sqrt{5})^{2r}(23274560163131324+10408699734234047\sqrt{5})\right.\\
    &\nonumber \qquad \left. +(5+\sqrt{5})^{4r} (150985072020448219+67522576925084747\sqrt{5})\right.\\
    &\nonumber \qquad \left. +4\cdot 20^r(5+\sqrt{5})^{2r}(1246135029698160+557288527109761\sqrt{5})r\right.\\
    &\nonumber \qquad \left. -2(5+\sqrt{5})^{4r}(17082273114533535+7639424778862807\sqrt{5})r\right.\\
   &\nonumber \qquad \left. +(5+\sqrt{5})^{2r}\left(2\cdot 20^r(1246135029698160+557288527109761\sqrt{5})\right)r^2\right.\\
    &\nonumber \qquad \left.\left. -(5+\sqrt{5})^{4r}(17082273114533535+7639424778862807\sqrt{5})r^2\right)\right]\\
    &\nonumber \qquad \left[(5+\sqrt{5})^8(15+7\sqrt{5})^6(2\cdot 20^r(360+161\sqrt{5})-(5+\sqrt{5})^{2r}(4935+2207\sqrt{5}))^3\right.\\
    &\nonumber \qquad \cdot \left.\left(-126\cdot 20^r+(73-25\sqrt{5})(5-\sqrt{5})^{2r}+(73+25\sqrt{5})(5+\sqrt{5})^{2r}\right.\right.\\
   &\nonumber \qquad \left.\left. + (72\cdot 20^r +13((5-\sqrt{5})^{2r}(-7+3\sqrt{5})-(5+\sqrt{5})^{2r}(7+3\sqrt{5}))r+10\cdot 20^r r^2\right) \right]^{-1}.
\end{align*}
Finally, note $\displaystyle\lim_{r\to \infty}\log{(M_{Y_r'(t)})}=\frac{1}{2}t^2$. Thus, $Y'_r$ converges to the standard normal distribution as $r\to \infty$.
\end{proof}

\subsubsection{Type $D$} 
Next we prove Theorem \ref{thm:type specific} for the Lie algebra of type $D_r$.

\begin{reptheorem}{thm:type specific}[Type $D_r$]
\label{them:GaussianD}
Let $\mu_{r}$ and $\sigma_r^2$ be defined as in Proposition \ref{prop:D}. Then the random variable $Y'_{D_r}=\left(Y_{D_r}-\mu_r\right)/\sigma_r^2$ converges to the standard Gaussian distribution as $r\to\infty$.
\end{reptheorem}

\begin{proof}
Recall 
{\small\begin{align}\label{typeD:coeffs}
   d_{\pm}\left(q\right) \ &= \ 
\dfrac{\left(q^7+3q^6+10q^5+16q^4+25q^3+16q^2+4q\right)\pm\left(q^6+3q^5+8q^4+12q^3+9q^2+2q\right)\sqrt{q^2+4}}{2\left(q^2+4\right)}.
\end{align}}

As with Types B and C, replacing $e^n$ with $1+n+\frac12 n^2 + O(n^3)$, Taylor expanding $y=\sqrt{x}$ about $x=5$ and $z = 1/x$ about $x = 10$, and replacing $x = 5+2n+2n^2$ and $x = 10+4n+4n^2$, respectively, gives

\begin{align}
    d_{+}\left(e^n\right) &= \frac{15}{2}+\frac{7\sqrt{5}}{2}+\left(22+\frac{51}{\sqrt{5}}\right)n+\left(36+\frac{829}{10\sqrt{5}}\right)n^2+O(n^3) \label{DPlus}
\end{align}
Similarly,
\begin{align}
    d_{-}\left(e^n\right) &= \frac{15}{2}-\frac{7\sqrt{5}}{2}+\left(22-\frac{51}{\sqrt{5}}\right)n+\left(36-\frac{829}{10\sqrt{5}}\right)n^2+O(n^3) \label{DMinus}
\end{align}

Taylor expanding $y = 1/x$ about $x = \frac{15}{2}+\frac{7\sqrt{5}}{2}$, replacing $x = d_+(e^n)$ as in Equation (\ref{DPlus}), and multiplying the resulting expression to Equation (\ref{DMinus}) gives \begin{align}
    \frac{d_-(e^n)}{d_+(e^n)} &= \frac{1}{(15+7\sqrt{5})^3}\left[-20(15+7\sqrt{5})+20(7+3\sqrt{5})n+(54+22\sqrt{5})n^2\right]+O(n^3). \label{DpmRatio}
\end{align}
Taylor Expanding $y = \log(x)$ about $x = \frac{15}{2}+\frac{7\sqrt{5}}{2}$ and replacing $x = d_+(e^n)$ as in Equation (\ref{DPlus}) gives
\begin{align}
    \log\left(d_+(e^n)\right) &= \frac{1}{5(15+7\sqrt{5})^2}\left[10(687+307\sqrt{5})n+3(387+173\sqrt{5})n^2\right]\label{LogDPlus} \\
    & \mbox{\qquad} -\log(2)+\log(15+7\sqrt{5})+O(n^3). \nonumber
\end{align}

Now, we can substitute Equation (\ref{LogDPlus}) into Equation (\ref{LogMSimp}), where $i_\mathfrak{g}=4$, to get 
\begin{align}
    \log[M] &= \frac{20n\left(-237-106\sqrt{5}+(284+127\sqrt{5})r\right)+n^2\left(-3357-1501\sqrt{5}+13(123+55\sqrt{5})r)\right)}{50(123+55\sqrt{5})}\label{logMTypeD} \\
    & \qquad +3\log{2}-4\log{(5+\sqrt{5})}+\log{(15+7\sqrt{5})}+r\left(\log{(5+\sqrt{5})}-\log{2}\right)+O(n^3). \nonumber
\end{align}

Substituting Equations (\ref{DpmRatio}) and (\ref{Neqn}) (with $i_\mathfrak{g} = 4$) into Equation (\ref{Seqn}), we get an equation for $1+S$:
\begin{align}
    1+S &= 1 - \frac{4^{r+4}5^{r+3}(5+\sqrt{5})^{-2(3+r)}}{(15+7\sqrt{5})^3}\left[64600+28890\sqrt{5}-10(2889+1292\sqrt{5})n(r-3) \right.\label{Insane1+SD}\\
    &\nonumber\qquad\left.+n^2(40806+18249\sqrt{5}-6(6460+2889\sqrt{5})r+(6460+2889\sqrt{5})r^2) \right]+O(n^3).
\end{align}

Next, we can Taylor expand $y = \log(x)$ about $x =  1 - \frac{4^{r+4}5^{r+3}(5+\sqrt{5})^{-2(3+r)}}{(15+7\sqrt{5})^3}\left(64600+28890\sqrt{5}\right)$ and replace $x = 1+S$ as in Equation (\ref{Insane1+SD}) to obtain
{\small\begin{align}
    \log{(1+S)} &= \frac{1}{(5+\sqrt{5})^{12}(15+7\sqrt{5})^6}\left(5.12\cdot 10^{11}\left(-1292+\frac{2889}{\sqrt{5}}\right)(215668928180+96450076809\sqrt{5})\right. \label{InsaneLog1+SD} \\ 
    &\nonumber \qquad \left. \cdot \log{\left(\frac{320000(5+\sqrt{5})^{-2(r+3)}(6460+2889\sqrt{5})\left(-20^r+(5+\sqrt{5})^{2r}\right)}{(15+7\sqrt{5})^3}\right)}\right) \\
    &\nonumber \qquad -\frac{2^{2r+18}5^{r+9}\left(-1292+\frac{2889}{\sqrt{5}}\right)(96450076809+43133785636\sqrt{5})(r-3)n}{(5+\sqrt{5})^12(15+7\sqrt{5})^6\left(20^r-(5+\sqrt{5})^{2r}\right)} \\
    &\nonumber \qquad -\frac{3\cdot 2^{2r+17}5^{r+8}\left(-1292+\frac{2889}{\sqrt{5}}\right)n^2}{(5+\sqrt{5})^12(15+7\sqrt{5})^{6}\left(20^r-(5+\sqrt{5})^{2r}\right)^2} \\
    &\nonumber \qquad \cdot \left(2^{2r+1}5^r(96450076809+43133785636\sqrt{5})+(5+\sqrt{5})^{2r}(454106630922+203082659155\sqrt{5}) \right. \\ 
    &\nonumber\qquad \left. -2(5+\sqrt{5})^{2r}(215668928180+96450076809\sqrt{5})r\right)+O(n^3). 
\end{align}
}

Given Equations (\ref{logMTypeD}) and (\ref{InsaneLog1+SD}), $n=t/\sigma_r$, $\mu_r$ and $\sigma_r$ as in Proposition \ref{prop:D}, and 
{\small\begin{align*}
    g_r(1) &= -\frac{5.12\cdot 10^{11}(16692641+7465176\sqrt{5})}{(5+\sqrt{5})^{12}(15+7\sqrt{5})^6}\left(r\log{\left(\frac{2}{5+\sqrt{5}}\right)}+4\log{(5+\sqrt{5})}-\log{(8(15+7\sqrt{5}))} \right.\\
    &\nonumber \left.\qquad-\log{\left(\frac{320000(5+\sqrt{5})^{-2(r+3)}(6460+2889\sqrt{5})\left(-20^r+(5+\sqrt{5})^{2r}\right)}{(15+7\sqrt{5})^3}\right)}\right),
\end{align*}
}
we find that
\begin{align*}
    \log{\left(M_{Y_r'(t)}\right)} &=\log[g_r(e^n)]-\log[g_r(1)]-\frac{t\mu_r}{\sigma_r}=k_0+k_1 t+k_2 t^2+O\left(\left(\frac{t}{\sigma_r}\right)^3\right)
\end{align*}
where $k_0$ and $k_1$ simplify to 0 and 
\begin{align*}
    k_2 &= \frac{2.56\cdot10^{11}}{(5+\sqrt{5})^{12}(15+7\sqrt{5})^6\left(20^r-(5+\sqrt{5})^{2r}\right)^2} \\
    &\nonumber \qquad \cdot \left[23\cdot 20^r+(3\sqrt{5}-34)(5+\sqrt{5})^{2r}-(5-\sqrt{5})^{2r}(34+3\sqrt{5}) \right. \\
    &\nonumber \qquad\left.+\left(4\cdot20^{r}+13\left((5-\sqrt{5})^{2r}+(5+\sqrt{5})^{2r}\right)\right)r-5\cdot20^r r^2\right]^{-1} \\
    &\nonumber \qquad \cdot \left[389743431\cdot 2^{6r+1}5^{3r+1/2}+1742985611\cdot 8000^r \right.\\
    & \nonumber \qquad -339763717\cdot2^{4r+1}\left((25-5\sqrt{5})^{2r}+(5(5+\sqrt{5}))^{2r}\right) \\
    &\nonumber \qquad -303893907\cdot 5^{2r+1/2}16^r\left((5-\sqrt{5})^{2r} + (5+\sqrt{5})^{2r} \right)\\
    & \nonumber \qquad + 383930743(5+\sqrt{5})^{3r}\left(20^r(5+\sqrt{5})^{r} - 2^{2r+1}(25-5\sqrt{5})^r  \right) \\
    & \nonumber \qquad + 21462381\cdot 5^{r+1/2}(5+\sqrt{5})^{3r}2^{(2r+3)}\left(2(5+\sqrt{5})^{r}-2(5-\sqrt{5})^r \right)\\
    & \nonumber \qquad -455572154(5+\sqrt{5})^{4r}\left((5-\sqrt{5})^{2r}+(5+\sqrt{5})^{2r} \right)\\
    & \nonumber \qquad +911144308(5-\sqrt{5})^r(5+\sqrt{5})^{5r}+407476122\sqrt{5}(5-\sqrt{5})^r(5+\sqrt{5})^{5r} \\
    & \nonumber \qquad \left. -203738061 \sqrt{5}(5+\sqrt{5})^{4r} \left((5+\sqrt{5})^{2r} + (5-\sqrt{5})^{2r}\right) \right] \\
    & \nonumber \qquad + r \left[-10264617\cdot 4^{3r+2}5^{3r+1/2}-183619051\cdot 2^{6r+1}125^r +12130911\cdot 2^{4r+3}5^{2r+1/2}(5+\sqrt{5})^{2r}\right.\\
    & \nonumber \qquad +217004333\cdot \left(400^r\left((5-\sqrt{5})^{2r}+(5+\sqrt{5})^{2r}\right)+(5+\sqrt{5})^{6r}\right) \\
    & \nonumber \qquad 
    +16692641\cdot2^{2r+2}5^r(5+\sqrt{5})^{3r}\left((5+\sqrt{5})^{r}-2(5-\sqrt{5})^r \right) \\
    & \nonumber \qquad +933147\cdot 2^{2r+5}5^{r+1/2}(5+\sqrt{5})^{4r}
    +97047288\sqrt{5}(5+\sqrt{5})^{6r} \\
    & \nonumber \qquad -2(5-\sqrt{5})^r(5+\sqrt{5})^{3r}\left(9331472^{2r+5}5^{r+1/2}+13(5+\sqrt{5})^{2r}(16692641+7465176\sqrt{5})\right) \\
    & \nonumber \qquad \left. +13(5-\sqrt{5})^{2r}\left(933147\cdot2^{4r+3}5^{2r+1/2}+(5+\sqrt{5})^{4r}(16692641+7465176\sqrt{5})\right) \right].
\end{align*}
Finally, note $\displaystyle\lim_{r\to \infty}\log{(M_{Y_r'(t)})}=\frac12t^2$. Thus, $Y'_r$ converges to the standard normal distribution as $r\to \infty$.
\end{proof}

\end{document}